\theoremstyle{plain}
\newtheorem{thm}{Theorem}[section]
\newtheorem{prop}[thm]{Proposition}
\newtheorem{cor}[thm]{Corollary}
\newtheorem{lem}[thm]{Lemma}
\theoremstyle{definition}
\newtheorem{exa}[thm]{Example}
\newtheorem{rem}[thm]{Remark}
\newtheorem{defn}[thm]{Definition}
\def\det{\mathop{\mathrm{det}}\nolimits}
\def\Ker{\mathop{\mathrm{Ker}}\nolimits}
\def\F{\mathop{\mathbb{F}}\nolimits}
\newcommand{\lra}{\longrightarrow}
\newcommand{\ra}{\rightarrow}
\newcommand{\Q}{{\Bbb Q}}
\newcommand{\R}{{\Bbb R}}
\newcommand{\Z}{{\Bbb Z}}
\newcommand{\N}{{\Bbb N}}
\newcommand{\pc}[2]{\mbox{$\begin{array}{c}
\includegraphics[scale=#2]{#1.eps}
\end{array}$}}
\begin{document}

\large
\begin{center}{\bf\Large Twisted Alexander invariants of knot group representations II; computation and duality}.
\end{center}
\vskip 1.5pc
\begin{center}{Takefumi Nosaka\footnote{
E-mail address: {\tt nosaka@math.titech.ac.jp}
}}\end{center}
\vskip 1pc
\begin{abstract}\baselineskip=12pt \noindent
Given a homomorphism from a link group to a group,
we introduce a $K_1$-class, which is a generalization of the 1-variable Alexander polynomial.
We compare the $K_1$-class with
$K_1$-classes in \cite{Nos} and with Reidemeister torsions. 
As a corollary, we show a relation to Reidemeister torsions of finite cyclic covering spaces, and show reciprocity in some senses.
\end{abstract}
\begin{center}
\normalsize
\baselineskip=17pt
{\bf Keywords} \\
\ \ \ knot, Alexander polynomial, $K_1$-group, Novikov ring \ \
\end{center}

\large
\baselineskip=16pt
\section{Introduction}\label{IntroS}

Let $ L \subset S^3$ be a knot in the 3-sphere.
The Alexander polynomial of $L$ has been studied in many ways, and
its applications and beautiful properties are discovered,
including, e.g., relations to Reidemeister torsions, Fox derivatives, and
applications to cyclic coverings and slice knots.
As a generalization, after twisted Alexander polynomials are introduced \cite{Wada,Lin},
similar applications and properties are discovered (however, such results frequently require some assumptions); see \cite{FV,FKK} and references therein. 

In the previous paper \cite{Nos}, which is inspired by \cite{Mil,Tur}, 
given a homomorphism from a link group to a group,the author suggested elements of
a $K_1$-group, which are a generalization of the twisted Alexander polynomials.
This paper gives several approaches to the $K_1$-class in another way.
In Sections \ref{defS2} and \ref{invrelS3}, we give two other definitions as $K_1$-classes from the viewpoint of the Fox derivatives or Reidemeister torsion over $K_1$.
Here, an advantage of the definitions is applicable to not only knots but also links; see Definition \ref{def11}.
Furthermore, in the knot case,
we show the equivalence (up to some ambiguity) of the three definitions (Theorem \ref{cor1155}).
As a corollary of the equivalence, 
we compute the $K_1$-classes of some 2-bridge knots; see Section \ref{invrelS772}.

In Sections \ref{def732}--\ref{ExaSS77}, we will address some applications.
First, under some conditions, we give a relation to (commutative) Reidemeister torsion of the $m$-fold cyclic covering space of $S^3 \setminus L$; see Section \ref{def732}.
Next, we will see that the conditions are suitable to group homomorphisms $f: \pi_1(S^3 \setminus L) \ra \Z \rtimes G$, where $G$ is a finite group.
In this situation,
we show (Theorem \ref{pro366}) a duality theorem of the $K_1$-class, as in reciprocity of the (twisted) Alexander polynomials \cite{FKK,Mil,Tur}.
In application, we give an estimate of sliceness of knots, which is a slight generalization of the works of Herald-Kirk-Livingston \cite{HKL, KL}.
In fact, we find that the conditions in Section \ref{ExaSS77} are applicable to the Casson-Gordan theory \cite{CG}.

Finally, under some conditions, we give a relation to the circle valued Morse theory (see Appendix \ref{paji}), and a reduction to the higher order Alexander polynomial \cite{C,Har}; see Appendix \ref{invrelS4}. These discussions rely on the works of \cite{Paji,Fri2}.



\

\noindent
{\textbf{Conventional notation.} For a group $G$ and a commutative ring $A$, we denote the group ring by $A [G]$, and the abelianization by $G_{\rm ab}$.
Every non-commutative ring $R$ has always 1, and is assumed to satisfy that $R^r $ and $R^s$ with $r \neq s$ are not isomorphic as $R$-modules.
We mean by $R^{\times}$ the multiplicative group consisting of units in $R$.

\subsection*{Acknowledgments}
The author expresses his gratitude to Takahiro Kitayama, Ryoto Tange for valuable comments.


\section{Definition of the Alexander $K_1$-class}\label{defS2}

We give the definition as a generalization of the Alexander polynomial; see Definition \ref{def11}. 

As a similar setting to \cite{Nos}, we set up algebraic terminologies and need an assumption.
Let $\mathcal{A}$ be a ring, which is possibly non-commutative, and take a ring isomorphism $\kappa : \mathcal{A} \ra \mathcal{A} $. 
Then, we have the completed skew Laurent-polynomial ring $\mathcal{A}_{\kappa}( \!( \tau) \!) $.
Namely, $\mathcal{A}_{\kappa}( \!( \tau) \!) $ is the set of formal power series $ \sum_{i= - N }^{\infty}a_i \tau^i $ where $a_i \in \mathcal{A}$ and $ \tau^n a = \kappa^n (a) \tau^n $; in other words, $\mathcal{A}_{\kappa}( \!( \tau) \!) $ is equal to $\mathcal{A}_{\kappa}[ \![ \tau] \!][\tau^{-1}] $, which is also called the Novikov ring in \cite{Fri,PR}.
Furthermore, let us fix a group $G$ with presentation $ \langle x_1, \dots, x_m | r_1, \dots, r_{m-1} \rangle$ of deficiency 1, 
and consider the following assumption throughout this paper.

\vskip 0.61pc

\noindent
\underline{\bf Assumption $(\dagger)$}
Let $\mathcal{A} $ be a ring and $\kappa :\mathcal{A} \ra \mathcal{A}$ be as above. 
Suppose a ring homomorphism $\rho: \Z[ G ] \ra \mathcal{A}_{\kappa}( \!( \tau) \!) $ such
that, for any $i \leq m$, there is $ w_i \in \mathcal{A}_{\kappa}( \!( \tau) \!)^{\times} $ such taht $\rho (x_i) = w_i^{-1} \tau w_i $.

\begin{exa}[{A special case of Novikov completion}]\label{exppp0}
Set a link $L$ in the 3-sphere $S^3$. Choose a link diagram $D$, and let $m$ be the number of the arcs on $D$.
Then, the Wirtinger presentation from $D$ gives such a presentation $\pi_1(S^3 \setminus L) \cong \langle x_1, \dots, x_m | r_1, \dots, r_{m-1} \rangle$, possibly $r_j=1$ (see, e.g., \cite{Lic}, \cite[\S 5]{Wada}).

Given a group homomorphism $h: \pi_1(S^3 \setminus L) \ra H \rtimes \Z$ such that $H$ is a group, $h (x_i) =(g_i, 1) $
for some $g_i \in G$.
Let $\mathcal{A} $ be the group ring $B[H]$ over a commutative ring $B$.
If we replace $ h(x_i )$ by $h(x_i ) \tau $ and define $\kappa(a):= (0,1) (a,0) (0,-1)$ for $a \in H$
we have $\mathcal{A}_{\kappa}( \!( \tau) \!) $. This $h$ canonically gives rise to $ \rho : \Z[ \pi_1(S^3 \setminus L)] \ra \mathcal{A}_{\kappa}( \!( \tau) \!) $ satisfying $(\dagger).$

In general, we obtain such an $h$
from any group $G$ and any group homomorphism $f: \pi_1(S^3 \setminus L) \ra G $,
as follows.
Let $\mathrm{Ab}: \pi_1(S^3 \setminus L ) \ra \Z^{\sharp L}$ be an abelianization,
and $\mu: \Z ^{\sharp L} \ra \Z$ be the multiplication such that $ \mu \circ \mathrm{Ab}(x_i) =1$ for any $i$. Notice the isomorphism $\pi_1(S^3 \setminus L) \cong \Ker ( \mu \circ \mathrm{Ab}) \rtimes \Z $.
Let $ H \subset G$ be the restricted image $f( \Ker( \mu \circ \mathrm{Ab}) ) $, and
let $\Z = \{ \tau^n \}_{n \in \Z} $ act on $H$ by $ g \cdot \tau^n:= f(x_1)^{n} g f( x_1)^{-n} $.
By the action, we have the semi-direct product $H \rtimes \Z $, and
can define a homomorphism $$ h: \pi_1(S^3 \setminus L) = \Ker ( \mu \circ \mathrm{Ab}) \rtimes \Z \ra H \rtimes \Z \ \ \ \mathrm{by} \ \ \ h( g, n )= (f(g) ,n), $$
which satisfy the condition in the previous paragraph.
\end{exa}

Let us review up $K_1$-groups. 
For a ring $R$ with unit, let $GL_n(R)$ be the general linear group over $R$ of size $n$.
Since $ GL_n(R)$ diagonally injects into $GL_{n+1}(R) $, we have the colimit $GL(R) = \lim GL_n(R) $.
The $K_1$-group, $K_1(R)$, is defined to be the abelianization $ GL(R)_{\rm ab}$.
By abuse of notation, we often regard elements of $GL_n(R)$ as those of $K_1(R)$.
When $ R=\mathcal{A}_{\kappa}( \!( \tau) \!)$ as above,
$ -1$, $ \tau$, and $ \rho (g)$ are represented by the invertible $(1\times 1)$-matrices. Let
$\pm \tau$ denote the subgroup of $K_1( \mathcal{A}_{\kappa}( \!( \tau) \!))$ generated by
$-1$ and $\tau$, which is isomorphic to either $ \Z$ or $\Z \times \Z/2$.
Similarly, let $\pm \rho( G) $ be the subgroup of $K_1( \mathcal{A}_{\kappa}( \!( \tau) \!))$ generated by $-1$ and the image $\rho (G)$.
We later use the quotient groups $ K_1( \mathcal{A}_{\kappa}( \!( \tau) \!))/ \pm \tau$ and $ K_1( \mathcal{A}_{\kappa}( \!( \tau) \!))/ \pm \rho( G) $.



Furthermore, we study the Fox derivative and a Jocobi matrix.
Let $ \mathcal{F}$ be the free group with a basis, $x_1,\dots, x_{m}$. 
We define a $\Z$-linear map $\frac{\partial \ }{\partial x_i} : \Z [ \mathcal{F}] \ra \Z[ \mathcal{F}]$ by the following identities:
$$\frac{\partial x_j }{\partial x_i}= \delta_{ij}, \ \ \ \ \frac{\partial (hk) }{\partial x_i}= \frac{\partial h }{\partial x_i}+ h \frac{\partial k}{\partial x_i} \ \ \ \ \ \ \ (h,k \in F) . $$
Consider the $(m-1)\times m$ matrix, $A_{\rho}$, over $\mathcal{A}_{\kappa}( \!( \tau) \!) $ whose ($i,j$)-th entry
is $ \rho ( \frac{\partial r_i}{ \partial x_j}) \in \mathcal{A}_{\kappa}( \!( \tau) \!) $.
For $ 1 \leq k \leq m$, let us denote by $A_{\rho, k}$ by the $(m-1)\times (m-1)$-matrix obtained from $A_{\rho}$ by removing the $k$-th column.

\begin{defn}\label{def11}
Choose $k \in \mathbb{N}$ as above.
Suppose that $A_{\rho, k}$ is an invertible matrix.
We define {\it the $K_1$-Alexander class (with respect to $\rho$)} to be the $K_1$-class 
$$ [ A_{\rho, k}] \in K_1( \mathcal{A}_{\kappa}( \!( \tau) \!))/ \pm \tau . $$
On the other hand, if $A_{\rho, k}$ is not invertible, we define the $K_1$-class to be zero.
\end{defn}

\begin{thm}\label{cor11}
Let $G= \pi_1(S^3 \setminus L)$ be a link group with a Wirtinger presentation, as in Example \ref{exppp0}.
Then, the $K_1$-class $\Delta_{\rho }^{K_1}$ depends only on the homomorphism $\rho: \Z[\pi_1(S^3 \setminus L)] \ra \mathcal{A}_{\kappa}( \!( \tau) \!) $.
\end{thm}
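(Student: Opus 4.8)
The statement asserts that the class $[A_{\rho,k}]\in K_1(\mathcal{A}_{\kappa}( \!( \tau) \!))/\pm\tau$ is independent of the two choices entering its construction: the column $k$ that is deleted, and the Wirtinger presentation (equivalently, the diagram $D$) used to form $A_\rho$. The plan is to treat these separately. The common algebraic input is the Fox fundamental identity $\sum_{j}\frac{\partial r_i}{\partial x_j}(x_j-1)=r_i-1$ together with the fact that $\tau-1$ is a unit of the Novikov ring $\mathcal{A}_{\kappa}( \!( \tau) \!)$, with inverse $-\sum_{n\geq 0}\tau^n$. Since each relator satisfies $\rho(r_i)=1$, applying $\rho$ to the identity gives the null relation $A_\rho\cdot v=0$, where $v=(\rho(x_1)-1,\dots,\rho(x_m)-1)^{\mathrm{t}}$; and by Assumption $(\dagger)$ each entry $\rho(x_j)-1=w_j^{-1}(\tau-1)w_j$ is a unit whose $K_1$-class equals $[\tau-1]$.

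First I would prove independence of $k$. Put $V=\mathrm{diag}(\rho(x_1)-1,\dots,\rho(x_m)-1)$, so that the columns of $A_\rho V$ sum to zero by the null relation. A sequence of elementary column operations—adding, in $A_\rho V$ with its $\ell$-th column removed, all surviving columns to the $k$-th one, which the null relation turns into the negative of the $\ell$-th column, followed by one transposition and one sign change—identifies the $(m-1)\times(m-1)$ matrices obtained from $A_\rho V$ by deleting column $k$ or column $\ell$, up to $\pm 1$. These operations are right multiplications by elementary, permutation, and diagonal matrices, so they preserve invertibility and alter the class only by a sign. Since the deleted-column matrix is $A_{\rho,k}V_k$ with $[V_k]=[\tau-1]^{m-1}$ independent of $k$ (each $[\rho(x_j)-1]=[\tau-1]$), cancelling this common unit yields $[A_{\rho,k}]=\pm[A_{\rho,\ell}]$, hence equality in $K_1/\pm\tau$; the non-invertible cases are consistent because invertibility of $A_{\rho,k}$ and of $A_{\rho,\ell}$ are equivalent.

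For independence of the presentation I would invoke that any two diagrams of $L$ are related by Reidemeister moves, each changing the Wirtinger presentation by Tietze transformations, and check invariance move-by-move. Inverting a relator $r_i\mapsto r_i^{-1}$ negates row $i$ of $A_\rho$ (using $\rho(r_i)=1$), contributing only $\pm 1$; replacing $r_i$ by $r_ir_j$ adds row $j$ to row $i$, an elementary operation invisible in $K_1$; conjugating $r_i\mapsto w r_i w^{-1}$ left-multiplies row $i$ by $\rho(w)$, and since $[\rho(w)]=[\tau]^{e(w)}$ lies in $\pm\tau$ this too dies in the quotient (this is exactly why one divides by $\pm\tau$). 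Finally, introducing a generator $x_{m+1}$ with defining relator $x_{m+1}w^{-1}$ enlarges $A_\rho$ to a block matrix $\left(\begin{smallmatrix} A_\rho & 0 \\ * & 1\end{smallmatrix}\right)$; deleting an old column preserves this block form, whose $K_1$-class is $[A_{\rho,k}]$, and column-independence allows that choice of column. One must also check that $(\dagger)$ persists, which holds because every generator produced by these moves is again a meridian, so $\rho$ sends it to a conjugate of $\tau$.

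The main obstacle is the non-commutative column-independence step: unlike the classical determinant argument one cannot manipulate scalars freely, so the null relation must be pushed through $K_1$ via elementary, permutation, and diagonal matrices while tracking both the unit $[\tau-1]^{m-1}$ and the residual sign. The second delicate point is the bookkeeping showing that each Tietze move lands in $\pm\tau$—especially that relator conjugation contributes exactly $[\tau]^{e(w)}$—together with the verification that $(\dagger)$ and the meridional structure survive Reidemeister moves, so that the reduction to these moves is legitimate.
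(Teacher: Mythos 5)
Your proposal is correct and follows essentially the same route as the paper: column-independence via the Fox fundamental formula together with the invertibility of each $\rho(x_j)-1=w_j^{-1}(\tau-1)w_j$, and presentation-independence by checking the four strongly Tietze moves (relator inversion, conjugation, products, and generator addition), with the conjugation and stabilization contributions killed in the quotient by $\pm\tau$. The only cosmetic difference is that you reduce to Reidemeister moves on diagrams, whereas the paper directly cites Wada's lemma that any two Wirtinger presentations are strongly Tietze equivalent.
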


We will give the proof in \S \ref{lplp}.
Although we suppose invertibility of $A_{F,W}$, we give a criterion for the invertibility:

\begin{prop}\label{prop11} Suppose $\sharp L=1$, i.e., $L$ is a knot.
Then, $L$ is fibered if and only if $A_{\rho, k} $ is invertible for any homomorphism $\rho$ satisfying ($\dagger$).
\end{prop}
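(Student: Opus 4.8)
The plan is to recognize invertibility of $A_{\rho,k}$ as acyclicity of a twisted chain complex over the Novikov ring $\mathcal{A}_{\kappa}(\!(\tau)\!)$, and then to detect fiberedness through the circle-valued Morse theory of \cite{Paji,Fri2}. Throughout write $N=S^3\setminus L$ and let $\phi\colon \pi_1(N)\ra\Z$ be the abelianization; for a knot it sends every meridian to $1$ and has $C:=\Ker\phi=[\pi_1(N),\pi_1(N)]$. A preliminary observation I would record first is that Assumption $(\dagger)$ forces $\rho$ to be $\phi$-filtered with unit leading terms: since $\rho(x_j)=w_j^{-1}\tau w_j$ has $\tau$-valuation $1$ with a unit leading coefficient, an induction on word length shows that $\rho(g)$ has $\tau$-valuation $\phi(g)$ and unit leading coefficient for every $g\in\pi_1(N)$.

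\emph{Reduction to Novikov homology.} The presentation $2$-complex of the Wirtinger presentation computes $H_*(N;\mathcal{A}_{\kappa}(\!(\tau)\!))$ (coefficients twisted by $\rho$) by
\[
\mathcal{A}_{\kappa}(\!(\tau)\!)^{\,m-1}\xrightarrow{\ A_{\rho}\ }\mathcal{A}_{\kappa}(\!(\tau)\!)^{\,m}\xrightarrow{\ \partial_1\ }\mathcal{A}_{\kappa}(\!(\tau)\!),
\]
where $\partial_1$ has entries $\rho(x_j)-1$. By the observation above each $\rho(x_j)-1=-1+(\text{positive valuation})$ is a unit, so $\partial_1$ splits and $H_0=0$; the Fox identity $\sum_j\frac{\partial r_i}{\partial x_j}(x_j-1)=r_i-1$ then lets me delete the $k$-th column, and a standard argument gives
\[
A_{\rho,k}\in GL_{m-1}(\mathcal{A}_{\kappa}(\!(\tau)\!))\iff H_1(N;\mathcal{A}_{\kappa}(\!(\tau)\!))=0 .
\]
This reduces the proposition to a statement about Novikov homology and is consistent with Theorem \ref{cor11}.

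\emph{Fibered implies invertible for every $\rho$.} If $L$ is fibered then $N$ is the mapping torus of a monodromy $\psi$ on the fiber, so $C$ is free of rank $2g$ and $\pi_1(N)=C\rtimes_{\psi}\Z$. I would compute $H_1$ from this structure (legitimate since $H_1$ is presentation-independent): the relevant square boundary operator is $\tau'I-\Psi$, where $\tau'=\rho(t)$ has valuation $1$ and $\Psi$ is the $\rho$-image of the Fox Jacobian of $\psi_\ast$. As $\psi_\ast\in\mathrm{Aut}(C)$, the chain rule for Fox derivatives makes this Jacobian invertible over $\Z[C]$, whence $\Psi\in GL_{2g}(\mathcal{A}_{\kappa}[[\tau]])$ and $\tau'I-\Psi=-\Psi(I-\Psi^{-1}\tau')$ with $\Psi^{-1}\tau'$ of valuation $\ge 1$; the Neumann series $\sum_{k\ge0}(\Psi^{-1}\tau')^{k}$ converges in the Novikov ring, so $\tau'I-\Psi$ is invertible and $H_1=0$. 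This uses only $(\dagger)$ and the valuation filtration, hence holds for \emph{every} $\rho$.

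\emph{Converse, and the main obstacle.} Arguing contrapositively, suppose $L$ is not fibered and take the tautological representation: $\mathcal{A}=\Z[C]$, $\kappa$ conjugation by a fixed meridian $t$, and $\rho$ the inclusion $\Z[\pi_1(N)]\hookrightarrow\Z[C]_{\kappa}(\!(\tau)\!)$, whose target is exactly the Novikov completion $\widehat{\Z[\pi_1(N)]}^{\phi}$ of \cite{Fri,PR}. It satisfies $(\dagger)$ precisely because all meridians of a knot are conjugate: writing $x_i=g_i\,t\,g_i^{-1}$ and $w_i=\rho(g_i)^{-1}$ gives $\rho(x_i)=w_i^{-1}\tau w_i$. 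By the Novikov-homology criterion for fiberedness (Sikorav's theorem together with Stallings' fibration theorem; cf.\ \cite{Paji,Fri2}), $H_1(N;\Z[C]_{\kappa}(\!(\tau)\!))=0$ if and only if $C$ is finitely generated, i.e.\ if and only if $L$ is fibered; so for this non-fibered $L$ the group $H_1$ is nonzero and, by the reduction above, $A_{\rho,k}$ fails to be invertible. The main obstacle is exactly this converse, which rests on the deep equivalence between vanishing of full-group-ring Novikov homology and the existence of a fibration; the points I expect to require care are the verification of $(\dagger)$ for the tautological $\rho$ (where the hypothesis $\sharp L=1$ enters, through conjugacy of meridians) and the column-deletion bookkeeping ensuring that invertibility of the \emph{square} matrix $A_{\rho,k}$, rather than of $A_{\rho}$ up to stabilization, matches acyclicity.
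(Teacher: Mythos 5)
Your proof takes a genuinely different route from the paper's. The paper disposes of Proposition \ref{prop11} in two lines: it quotes from \cite{Nos} the statement that a knot is fibered if and only if the Seifert--surface matrix $A_{F,W}$ of \eqref{kkkkk} is invertible for every $\rho$, and then transfers invertibility of $A_{F,W}$ to invertibility of $A_{\rho,k}$ via Theorem \ref{cor1155}. You instead rebuild the fiberedness criterion from scratch: you identify invertibility of $A_{\rho,k}$ with acyclicity of the $\rho$-twisted complex (the same mechanism as in the proof of Theorem \ref{cor1155}), prove the forward direction by writing the boundary operator of the mapping-torus presentation as $\tau'I-\Psi$ with $\Psi$ the image of the invertible Fox Jacobian of the monodromy, and prove the converse by feeding the tautological representation into the Sikorav--Stallings characterization of fiberedness by vanishing of full-group-ring Novikov homology. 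This is a legitimate, more self-contained architecture, and your converse makes explicit where $\sharp L=1$ enters (conjugacy of meridians, needed for $(\dagger)$ to hold for the tautological $\rho$) --- something the paper's citation leaves implicit.

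Two steps need repair. First, your preliminary observation --- that $(\dagger)$ forces $\rho(g)$ to have $\tau$-valuation $\phi(g)$ with unit leading coefficient --- does not follow from $(\dagger)$ as stated: the $w_i$ there are arbitrary units of $\mathcal{A}_{\kappa}( \!( \tau) \!)$, and when $\mathcal{A}$ is not a domain (e.g.\ $\Q[H]$ with $H$ finite) a unit of the Novikov ring need not have an invertible lowest coefficient, so $\rho$ need not carry $\Z[\Ker\phi]$ into $\mathcal{A}_{\kappa}[ \![ \tau ] \!]$. This is harmless for the invertibility of $1-\rho(x_j)$ (the telescoping identity $(1-\rho(x_k))(1+\sum_{j\ge1}w_k^{-1}\tau^jw_k)=1$ used in the paper works for arbitrary units $w_k$), but it is exactly what your Neumann series needs: without valuation control on $\Psi$ and $\Psi^{-1}$, the series $\sum_{k\ge0}(\Psi^{-1}\tau')^{k}$ has no reason to converge. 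You should either restrict to $w_i$ of the form $a_i\tau^{n_i}$ with $a_i\in\mathcal{A}^{\times}$ (which is what holds in Example \ref{exppp0} and is surely the intended scope) or argue the invertibility of $\tau'I-\Psi$ without the filtration. Second, in the fibered direction you pass from ``$H_1=0$'' back to ``$A_{\rho,k}$ invertible''; over a general noncommutative ring a surjection $R^{m-1}\to R^{m-1}$ need not be bijective, so you should instead record that the mapping-torus computation gives an explicitly invertible square matrix (hence full acyclicity) and transfer invertibility to the Wirtinger presentation through the strong Tietze invariance of Proposition \ref{thm11}, rather than through $H_1$ alone.
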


We conclude this section by explaining that the twisted Alexander polynomial of \cite{Wada} can be formulated from our $K_1$-Alexander class:
\begin{exa}\label{exa112}
Let $R$ be a commutative ring.
The paper \cite[\S 5]{Wada} defines a polynomial from a representation $\rho^{\rm pre}: \pi_1(S^3 \setminus L) \ra SL_n(R)$ as follows.
For the formulation, let $\mathcal{A} $ be the matrix ring $\mathrm{Mat}(n \times n,R)$, 
and let $\kappa$ be the identity $\mathrm{id}_{\mathcal{A} }$.
Formally, let $\tau$ be $\rho^{\rm pre} (\mathfrak{m})$ as a commutative indeterminate. 
Since there is a ring homomorphism
$ \Z[ SL_n(R)] \ra \mathrm{Mat}(n \times n,R) $ which
sends $\sum_{g} a_g g$ to $\sum_{g} a_g g$, the $ \rho^{\rm pre} $ gives rise to a ring homomorphism $\rho:\Z[\pi_1(S^3\setminus L) ] \ra \mathcal{A}_{\kappa}( \!( \tau) \!)$.
Notice that the determinant $\mathrm{Mat}(n \times n,R_{\kappa} ( \!( \tau) \!)) \ra R_{\kappa} ( \!( \tau) \!)$ induces a homomorphism
$$\mathrm{det}: K_1(\mathcal{A}_{\kappa}( \!( \tau) \!) )/ \pm \tau \lra R_{\kappa} ( \!( \tau) \!)^{\times}/ \pm \tau . $$
Then, if $\# L =1$, checking \cite[Corollary 5]{Wada} carefully, we verify by construction that the determinant $\det (\Delta_{\rho}^{K_1} ) \in R_{\kappa } ( \!( \tau) \!)^{\times} / \pm \tau $ exactly coincides with the twisted Alexander polynomial of \cite{Wada}.
In other words, our $K_1$-class $\Delta_{\rho}^{K_1}$ is a lift of the twisted Alexander polynomial.
\end{exa}

\subsection{Computation for some 2-bridge knots}\label{invrelS772}

We will compute the $K_1$-classes with respect to every 2-bridge knots of Seifert genus 1. 

\

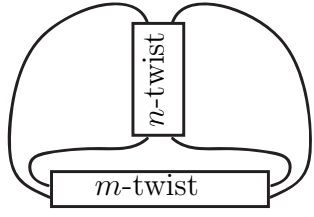
\begin{figure}[htpb]
\begin{center}
\begin{picture}(210,60)
\put(61,32){\pc{twist.knot2}{0.37104}}
\put(117,28){\large \rotatebox{90}{$n $-twist}}
\put(99,2){\large $m $-twist}
\end{picture}
\end{center}
\caption{\label{bbbbb} The 2-bridge knot of genus 1.}
\end{figure}

For non-zero integers $m$ and $n$, let $K(m, n)$
be the 2-bridge knot $S(4mn+1, 2m)$ in Schubert's form; see Figure \ref{bbbbb}. 
We may suppose $m>0$.
As is known, every 2-bridge knot of genus 1 is represented as one of $K(m, n)$.
Thanks to Proposition 2.1 in \cite{HT}, the knot group
$\pi_1(S^3 \setminus K(m, n))$ has the presentation
$$ \langle x,y | w^n x= y w^n\rangle , \ \ \ \ \mathrm{where} \ \ w= (xy^{-1})^m ( x^{-1}y)^{m}, $$
where $ x$ and $y$ are conjugate to a meridian.
Moreover, as seen in the proof of the proposition, this presentation is
strongly Tietze equivalent to a Wirtinger presentation.
\begin{prop}\label{torus2}Let $m>0.$ Consider the following element of $ \mathcal{A}_{\kappa}( \!( \tau) \!) $:
\[ Z_{m,n}:= \begin{cases} \rho (w^n) +(1- \rho (y) ) \frac{1-\rho(w^n)}{1-\rho(w)}\bigl( 1 - \rho(w x^{-1} ) \bigr) \frac{1-\rho(x y^{-1})^m}{1-\rho(xy^{-1})} & (n>0), \\
\rho (w^n) +( \rho (y)-1 )\rho(w^{-1}) \frac{1-\rho(w^n)}{1-\rho(w)}\bigl( 1 - \rho(w x^{-1} ) \bigr) \frac{1-\rho(x y^{-1})^m}{1-\rho(xy^{-1})}& (n<0 ).
\end{cases} \]
The matrix $A_{F,W}$ is invertible if and only if $Z_{m,n}$ is an invertible element.
If so, the $K_1$-Alexander invariant is given by
$$ \Phi_{\rho,k }= Z_{m,n} \in K_1( \mathcal{A}_{\kappa}( \!( \tau) \!))/ \pm \tau .$$
\end{prop}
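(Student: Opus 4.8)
The plan is to compute the relevant Fox derivative directly from the two-generator one-relator presentation and read off the single entry of the Jacobian. Since there are two generators $x,y$ and one relator, the matrix $A_{\rho}$ has size $1\times 2$, so each $A_{\rho,k}$ is a $1\times 1$ matrix, i.e. a single element of $\mathcal{A}_{\kappa}(\!(\tau)\!)$. I would delete the column corresponding to $y$, so that the surviving entry is $\rho(\partial r_{1}/\partial x)$ with relator $r_{1}=w^{n}xw^{-n}y^{-1}$ (this orientation is the one whose constant term is $\rho(w^{n})$, matching the shape of $Z_{m,n}$). A $1\times 1$ matrix is invertible precisely when its entry is a unit, so the invertibility assertion reduces to identifying this entry with $Z_{m,n}$; the equality of $\Phi_{\rho,k}$ with $Z_{m,n}$ in $K_{1}(\mathcal{A}_{\kappa}(\!(\tau)\!))/\pm\tau$ then follows from Definition \ref{def11}, while independence of the chosen representative is guaranteed by Theorem \ref{cor11}.

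The computation rests on two applications of the power rule for the Fox derivative. First, expanding $\partial r_{1}/\partial x$ by the product rule and the identity $\partial(g^{n})/\partial x=(1+g+\cdots+g^{n-1})\,\partial g/\partial x$ gives
$$\frac{\partial r_{1}}{\partial x}=\bigl(1-w^{n}xw^{-n}\bigr)\frac{\partial (w^{n})}{\partial x}+w^{n}.$$
Applying $\rho$ and substituting the defining relation in the form $\rho(w^{n}xw^{-n})=\rho(y)$ turns the outer factor into $1-\rho(y)$ and leaves the leading term $\rho(w^{n})$. A second application of the power rule to $w=(xy^{-1})^{m}(x^{-1}y)^{m}$, using $\partial(xy^{-1})/\partial x=1$, $\partial(x^{-1}y)/\partial x=-x^{-1}$, together with the conjugation identity $(x^{-1}y)^{i}x^{-1}=x^{-1}(xy^{-1})^{-i}$, collapses the second block and factors $\partial w/\partial x$ into the closed form $\bigl(1-\rho(wx^{-1})\bigr)\frac{1-\rho(xy^{-1})^{m}}{1-\rho(xy^{-1})}$ appearing in $Z_{m,n}$.

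To finish I would sum the outer geometric series. For $n>0$ the factor $\rho(\partial(w^{n})/\partial x)$ contributes $\frac{1-\rho(w^{n})}{1-\rho(w)}\,\rho(\partial w/\partial x)$, which assembles directly into the first branch of $Z_{m,n}$. For $n<0$ I would instead use the power rule for negative exponents, $\partial(g^{n})/\partial x=-g^{n}(1+g+\cdots+g^{-n-1})\,\partial g/\partial x$; reorganising this geometric sum produces the extra factor $\rho(w^{-1})$ and reverses the sign of $1-\rho(y)$ to $\rho(y)-1$, yielding the second branch. In either case the unique entry of $A_{\rho,k}$ equals $Z_{m,n}$, so it is invertible exactly when $Z_{m,n}$ is a unit of $\mathcal{A}_{\kappa}(\!(\tau)\!)$, and then $\Phi_{\rho,k}=Z_{m,n}$ in $K_{1}/\pm\tau$, while the non-invertible case gives $0$ by Definition \ref{def11}.

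I expect the main obstacle to be the bookkeeping in the second step: correctly handling the two blocks of $w$ and, in particular, applying the conjugation identity so that the telescoping leaves exactly the factor $1-\rho(wx^{-1})$ rather than an unreduced partial sum. A secondary subtlety is the negative-exponent case, where one must track the sign and the shift by $\rho(w^{-1})$ carefully; this is precisely where the two branches of $Z_{m,n}$ diverge. Everything else is routine manipulation of finite geometric series inside the Novikov ring $\mathcal{A}_{\kappa}(\!(\tau)\!)$ once the two power-rule expansions are in place.
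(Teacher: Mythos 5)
Your method is exactly the paper's: work with the Hakamata--Teragaito presentation $\langle x,y\mid w^nxw^{-n}y^{-1}\rangle$, observe that $A_{\rho,k}$ is the $1\times 1$ matrix with entry $\rho(\partial r/\partial x)$, and evaluate that Fox derivative via the product rule, the power rule for $w^{\pm n}$, and a telescoping of the two blocks of $w$. The outer part of your computation is fine: $\partial r/\partial x=(1-w^nxw^{-n})\,\partial(w^n)/\partial x+w^n$ together with $\rho(w^nxw^{-n})=\rho(y)$ reproduces the paper's intermediate expression $w^n+(1-y)(1+w+\cdots+w^{n-1})\,\partial w/\partial x$ for $n>0$, and the reduction of the whole proposition to this single entry (invertibility of a $1\times1$ matrix, then Definition \ref{def11} and Theorem \ref{cor11}) is exactly how the paper proceeds.

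The step that fails is the inner factorization. Using your own conjugation identity $(x^{-1}y)^ix^{-1}=x^{-1}(xy^{-1})^{-i}$, the telescoping yields
\[
\frac{\partial w}{\partial x}=\bigl(1-(xy^{-1})^mx^{-1}(yx^{-1})^{m-1}\bigr)\bigl(1+xy^{-1}+\cdots+(xy^{-1})^{m-1}\bigr)=\bigl(1-wy^{-1}\bigr)\,\frac{1-(xy^{-1})^m}{1-xy^{-1}},
\]
because $(xy^{-1})^mx^{-1}(yx^{-1})^{m-1}=(xy^{-1})^m(x^{-1}y)^{m-1}x^{-1}=wy^{-1}$, whereas $wx^{-1}=(xy^{-1})^mx^{-1}(yx^{-1})^{m}$ is a different element. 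The case $m=1$ makes this concrete: $\partial w/\partial x=1-xy^{-1}x^{-1}=1-wy^{-1}$, while $1-wx^{-1}=1-xy^{-1}x^{-1}yx^{-1}$, and these do not agree after applying a general $\rho$ since $\rho(x)\neq\rho(y)$ in general. So the telescoping does not "leave exactly the factor $1-\rho(wx^{-1})$" as you assert; to be fair, the paper's own proof commits the identical slip (its penultimate line correctly produces $1-(xy^{-1})^my^{-1}(yx^{-1})^m=1-wy^{-1}$ and its final line silently rewrites this as $1-wx^{-1}$), so the factor in the stated $Z_{m,n}$ should read $1-\rho(wy^{-1})$. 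A similar bookkeeping issue sits in your $n<0$ branch: reading $\frac{1-w^n}{1-w}$ as the signed geometric sum $-(w^{n}+\cdots+w^{-1})$, the expression $(y-1)(w^{-1}+\cdots+w^{n})$ you correctly arrive at equals $(1-y)\frac{1-w^n}{1-w}$ rather than $(y-1)w^{-1}\frac{1-w^n}{1-w}$. None of this changes the architecture of the argument, but the closed form your computation actually produces differs from the stated $Z_{m,n}$ in these factors, so you cannot simply declare that the entry "equals $Z_{m,n}$" without either correcting the target formula or justifying the substitution.
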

\begin{proof} Let $r=w^n x w^{-n}y^{-1} $.
It is sufficient to show $Z_{m,n}= \rho ( \frac{\partial r}{ \partial x})$.
Notice that 
\[ \frac{\partial r}{ \partial x} = \frac{\partial w^n}{ \partial x} + w^n \frac{\partial \ \ }{ \partial x}( x w^{-n} y^{-1})= \frac{\partial w^n}{ \partial x} + w^n (1+ x\frac{\partial \ \ }{ \partial x}( w^{-n})) \]
\[ = \begin{cases}
(1+w+ \cdots +w^{n-1}) \frac{\partial w}{ \partial x} +w^n + y w^n(1+w^{-1}+ \cdots +w^{1-n }) \frac{\partial w^{-1}}{ \partial x} & (n>0), \\
(1+w^{-1}+ \cdots +w^{n+1}) \frac{\partial w}{ \partial x} +w^n + y w^n(1+w^{-1}+ \cdots +w^{-n-1 }) \frac{\partial w^{-1}}{ \partial x }& (n<0 ),
\end{cases} \]
\[ = \begin{cases}
w^n + (1-y)(1+w+ \cdots +w^{n-1 }) \frac{\partial w}{ \partial x} & (n>0) ,\\
w^n + (y-1)(w^{-1}+ \cdots +w^{n }) \frac{\partial w}{ \partial x} & (n<0 ).
\end{cases}\]
Here, the last equality is due to $\frac{\partial w^{-1}}{ \partial x} = -w^{-1} \frac{\partial w}{ \partial x}$.
We should observe
\[\frac{\partial w}{ \partial x} = (1+ xy^{-1}+ \cdots + (xy^{-1})^{m-1})- (xy^{-1})^m (1+ x^{-1}y+ \cdots + (x^{-1}y)^{m-1})x^{-1} \]
\[=(1 -(xy^{-1})^m y^{-1} (y x^{-1})^m )(1+ xy^{-1}+ \cdots + (xy^{-1})^{m-1} ) \]
\[=(1 -w x^{-1}) (1 -(xy^{-1})^{m} )/(1- xy^{-1}) . \]
Therefore, we have $Z_{m,n}= \rho ( \frac{\partial r}{ \partial x})$ as required.
\end{proof}
The above computation is done as an element of $\mathcal{A}_{\kappa}( \!( \tau) \!))$; however,
there are many examples
such that $\Phi_{\rho,k }$ the $K_1$-Alexander invariant $\Phi_{\rho,k } $ can not be represented by any $1 \times 1$-matrix.
e.g, $L$ is a non-fibered pretzel knot.
Indeed, even concerning the classical Alexander module,
the minimal number of sizes of the matrix $ \Phi_{\rho,k }$ is estimated by Nakanishi index.

\subsection{Proof of Theorem \ref{cor11}}\label{lplp}

For the proof, we review the strongly Tietze transformations \cite{Wada}.
For a finite presentable group $G= \langle g_1, \dots, g_m | r_1, \dots, r_n \rangle$ and a
word $w$ of $g_1,\dots, g_m$, the transformation of the following types are called the {\it strongly Tietze transformations}:

(Ia) To replace one of the relators $r_i$ by its inverse $r_i^{-1}.$

(Ib)
To replace one of the relators $r_i$ by its conjugate $w r_i^{-1} w^{-1}.$

(Ic) To replace one of the relators $r_i$ by $r_i r_j $ for any $j \neq i$.

(II) To add a new generator $y$ and a new relator $ y w^{-1}.$ In other words, the resulting presentation is given by
$ \langle g_1, \dots, g_m ,y | r_1, \dots, r_n ,yw^{-1}\rangle $.

\

Two presentations of $G$ are said to be {\it strongly Tietze equivalent}, if they are
related by a finite sequence of the operations of the above types and their inverse operations.
\begin{proof}[Proof of Theorem \ref{cor11}]
It is shown \cite[Lemma 6 in \S 5]{Wada} that any two Wirtinger presentations for a given link $L$ are strongly Tietze equivalent.
Hence, for the proof of Theorem \ref{cor11}, it is enough to show the following proposition, which claims an invariance with respect to strongly Tietze equivalence:
\end{proof}
\begin{prop}[{cf. \cite[Theorem 4.5]{Kit}}]\label{thm11}
The $K_1$-class $[ \Phi_{\rho, k}]$ in $K_1( \mathcal{A}_{\kappa}( \!( \tau) \!))/ \pm \tau $ does not depend on the choice of $k$.
Moreover, if we change another presentation of $ G$ which is strongly Tietz equivalent to the above presentation, the associated $K_1$-class $[ \Phi_{\rho, k}]$ in $K_1( \mathcal{A}_{\kappa}( \!( \tau) \!))/ \pm \tau$ is invariant.
\end{prop}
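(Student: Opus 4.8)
The proof that I envision proceeds by analyzing each of the four strongly Tietze transformation types (Ia), (Ib), (Ic), (II) separately, together with the independence from $k$, and checking that in each case the $K_1$-class $[\Phi_{\rho,k}]$ is unchanged in $K_1(\mathcal{A}_{\kappa}(\!(\tau)\!))/\pm\tau$. The overarching principle is that each transformation induces an explicit row/column operation (or a block stabilization) on the Jacobi matrix $A_\rho$, and since $K_1$ is built from $GL(R)_{\mathrm{ab}}$, elementary operations and stabilizations do not alter the class. The plan is to translate each combinatorial move on the presentation into a matrix move, and then invoke the defining relations of $K_1$.

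First I would dispose of the independence from the choice of $k$. The key fundamental identity is that for any word $r$ in the free group, the Fox derivatives satisfy $\sum_{j} \frac{\partial r}{\partial x_j}(x_j - 1) = r - 1$; applying $\rho$ and using Assumption $(\dagger)$, namely $\rho(x_j) = w_j^{-1}\tau w_j$, one obtains a linear relation among the columns of $A_\rho$. Concretely, the column corresponding to $x_j$ gets multiplied on the right by $(\rho(x_j)-1)$, and the weighted sum of columns vanishes modulo the relators (which map to $0$ since $\rho$ is a ring homomorphism on the group, so $\rho(r_i)=1$). This produces, after suitable right multiplication by the invertible diagonal-type factors $\rho(x_j)-1$, a relation letting one pass from $A_{\rho,k}$ to $A_{\rho,k'}$ by an invertible change that lives in $\pm\tau$ after taking determinants over $K_1$. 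The main subtlety here is controlling the units $\rho(x_j)-1$: these need not be units in $\mathcal{A}_{\kappa}(\!(\tau)\!)$, so I would argue at the level of $K_1$-classes using that $\rho(x_j)$ is conjugate to $\tau$, and that the relevant cofactor expansion identity holds formally, as in \cite[Theorem 4.5]{Kit}.

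Next I would handle the four transformation types. For (Ia), replacing $r_i$ by $r_i^{-1}$ uses $\frac{\partial r_i^{-1}}{\partial x_j} = -r_i^{-1}\frac{\partial r_i}{\partial x_j}$; after applying $\rho$ and noting $\rho(r_i)=1$, this multiplies the $i$-th row by $-1$, changing the $K_1$-class only by $-1 \in \pm\tau$. For (Ib), conjugation by $w$ gives $\frac{\partial(w r_i w^{-1})}{\partial x_j} = w\frac{\partial r_i}{\partial x_j}w^{-1} + (\text{terms from } \partial w)$; again using $\rho(r_i)=1$ the extra terms cancel, and the row is left-multiplied by the unit $\rho(w)$, which lies in $\pm\rho(G)$ but whose determinant contribution is absorbed since $\rho(w)$ is a product of the $\rho(x_i)$ each conjugate to $\tau$. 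For (Ic), replacing $r_i$ by $r_i r_j$ induces $\frac{\partial(r_i r_j)}{\partial x_k} = \frac{\partial r_i}{\partial x_k} + r_i\frac{\partial r_j}{\partial x_k}$, which, modulo $\rho(r_i)=1$, is exactly the row operation adding row $j$ to row $i$ — an elementary operation trivial in $K_1$. For (II), adding a generator $y$ and relator $yw^{-1}$ enlarges $A_\rho$ by one row and one column; I would show the new matrix is block-triangular with the new diagonal entry equal to $\frac{\partial(yw^{-1})}{\partial y}=1$, so the class is stabilized without change.

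The step I expect to be the genuine obstacle is the $k$-independence combined with tracking the ambiguity group. The cofactor relation forces one to multiply columns by factors $\rho(x_j)-1$ that are not units, so the naive determinant manipulation fails over $\mathcal{A}_{\kappa}(\!(\tau)\!)$; I must instead argue entirely within $K_1$ using the conjugacy $\rho(x_j)=w_j^{-1}\tau w_j$ to reduce each such factor to $\tau - 1$ up to conjugation, and then show that the resulting discrepancy between $[A_{\rho,k}]$ and $[A_{\rho,k'}]$ lands in the subgroup $\pm\tau$ by which we have quotiented. This is precisely the point where the Novikov-ring setting is used: invertibility of $1-\rho(w)$-type elements follows from expanding as geometric power series in $\tau$, which is available because the leading $\tau$-degree is positive. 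Once this reduction is in place, the remaining verifications for (Ia)--(Ic) and (II) are routine row operations and stabilizations, and the theorem follows by composing the moves along the finite Tietze sequence.
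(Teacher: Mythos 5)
Your proposal follows essentially the same route as the paper: the $k$-independence via Fox's fundamental formula $\sum_j \frac{\partial r_i}{\partial x_j}(x_j-1)=r_i-1$ combined with the geometric-series invertibility of $1-\rho(x_j)$ in the Novikov ring and the identification $\rho(x_j)-1=\tau-1$ in $K_1$ (so the two column factors cancel), and then the four Tietze moves handled by the standard Fox-derivative identities, Whitehead's lemma for elementary operations, and $\rho(w)=\tau^{n_w}$ being killed in the quotient by $\pm\tau$. The only quibbles are cosmetic: your displayed formula for (Ib) should read $\frac{\partial(wr_iw^{-1})}{\partial x_j}=\frac{\partial w}{\partial x_j}+w\frac{\partial r_i}{\partial x_j}+wr_i\frac{\partial w^{-1}}{\partial x_j}$ (left multiplication by $w$ after using $\rho(r_i)=1$, not conjugation), and your initial worry that $\rho(x_j)-1$ might fail to be a unit is resolved exactly as you later note, by the power-series expansion.
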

\begin{proof}
The proof is essentially based on the proofs of \cite[Lemma 2 in \S 3]{Wada} and \cite[Theorem 4.5]{Kit}.
Following the proofs, we will check $(i)$ the independence of the choice of $k$, and $(ii)$ the invariance with respect to each transformations (Ia)(Ib)(Ic)(II).

To show $(i)$, choose $k'$. Let $e_{j}(a) $ be the diagonal $(m-1)^2$-matrix
whose $(j,j) $-th entry is $a$ and $ (k,k)$-th entry is $1$ where $j \neq k$.
Then, according to the fundamental formula
$$ \sum_{j=1}^m \frac{\partial r_i}{\partial x_j} (x_j -1) =r_i -1$$
(see \cite[(2.3)]{Fox}), we have
\[ A_{\rho, k'} e_k ( \rho(x_k) -1 ) = \Bigl(\dots, \rho \Bigl( \frac{\partial r_i}{\partial x_k} \Bigr) \rho(x_k -1) ,\dots \Bigr)= \Bigl(\dots, - \sum_{j \neq k }\rho \Bigl( \frac{\partial r_i}{\partial x_j} \Bigr) \rho(x_j -1) ,\dots \Bigr) \]
\[= \Bigl(\dots, -\rho \Bigl( \frac{\partial r_i}{\partial x_{k'}} \Bigr) \rho(x_{k'} -1) ,\dots \Bigr) = (-1)^{k-k'} A_{\rho, k} e_{k'} ( \rho(x_{k'}) -1 ) \in \mathrm{Mat}( m\times m; \mathcal{A}_{\kappa}( \!( \tau) \!) ). \]
Notice that $e_k ( \rho(x_k) -1 ) $ is invertible, since
$ 1 - \rho (x_k )$ is invertible because of
$ \bigl( 1 - \rho (x_k )\bigr)(1 + \sum_{j=1}^{\infty } w_k^{-1 } \tau^j w_k )=1$.
Therefore, the invertibility of $A_{\rho,k}$ implies that $ A_{\rho, k'} $ is also invertible.
Moreover, since $ \rho(x_{k}) -1= \rho(w_k^{-1}) (\tau -1) \rho(w_k) = \tau -1 $ in $K_1$ by Assumption $(\dagger)$, we obtain $A_{\rho, k'} = A_{\rho, k} $ in the quotient $K_1( \mathcal{A}_{\kappa}( \!( \tau) \!))/ \pm \tau$.

Next, concerning $(ii)$, we consider strongly Tietz transformations.
Notice
\[\frac{\partial (r_i^{-1})}{\partial x_j} = -r_i^{-1} \frac{\partial r_i}{\partial x_j}, \ \ \ \ \
\frac{\partial (wr_iw^{-1})}{\partial x_j}= w\frac{\partial r_i}{\partial x_j}
, \ \ \ \ \ \ \frac{\partial (r_i r_{\ell})}{\partial x_j} =
\frac{\partial r_i}{\partial x_j} +r_i \frac{\partial r_\ell}{\partial x_j}. \]
Therefore,
the changes of the $K_1$-classes with respect to Ia and Ib
are $A_{\rho,k} \mapsto - A_{\rho,k}$, and
$A_{\rho,k} \mapsto (-1)^k \rho (w) A_{\rho,k}$, respectively.
Furthermore, recalling from Whitehead Lemma \cite[Lemma III.1.3.3]{Wei} that
any elementary matrix is 1 in $K_1$,
we can easily verify that
the changes of the $K_1$-values with respect to Ic and II
are $A_{\rho,k} \mapsto A_{\rho,k}$, and
$A_{\rho,k} \mapsto \rho (w^{-1}) A_{\rho,k}$, respectively.
Since $\rho(x_k) = \tau \in K_1(\mathcal{A}_{\kappa}( \!( \tau) \!))$,
we have $\rho (w) = \tau ^{n_w} \in K_1(\mathcal{A}_{\kappa}( \!( \tau) \!))$ for some $n_w \in \mathbb{Z}$ by assumption.
Hence, the observation in the quotient $K_1(\mathcal{A}_{\kappa}( \!( \tau) \!) )/ \pm \tau $ proves (ii).
\end{proof}

\section{Relation to the $K_1$-class in \cite{Nos}}\label{invrelS2}
In what follows, we let $L$ be a knot embedded in the 3-sphere $S^3$, i.e., $\# L =1,$ and choose a knot diagram on $\R^2$.

In the paper \cite{Nos}, the author constructed
another $K_1$-class, where the construction is inspired by \cite{Lin}.
This section shows (Theorem \ref{cor1155}) that this class and another $[\Phi_{\rho,k}]$ in \S \ref{defS2} are almost equal.
In this section, we assume that $L$ is a knot, and choose a meridian $\mathfrak{m} \in \pi_1(S^3 \setminus L) $ such that $\rho( \mathfrak{m}) = \tau $.

We recall the presentation \eqref{oo456} and the definition of the $K_1$-class below.
We choose a Seifert surface $F$ of genus $g$ and
a bouquet of circles $W \subset F$ such that $W$ is a deformation retract of $F$
and the inclusion $F \subset S^3$ is isotopic to the standard embedding $W \subset F$.
Take a bicollar $F \times [-1,1]$ of $F$ such that $ F \times \{ 0\} =F$.
Let $\iota_{\pm}: F \ra S^3 \setminus F$ be the embeddings whose images are $ F \times \{ \pm 1\}$. Take generating sets $W:=\{ u_1, \dots, u_{2g} \}$ of $\pi_1 F $ and
$ X:=\{ x_1, \dots, x_{2g} \} $ of $ \pi_1(S^3 \setminus F)$, and set
$y_i^{\sharp}:=(\iota_+)_* (u_i)$ and $z_i=(\iota_-)_* (u_i)$;
a von Kampen argument yields a presentation
\begin{equation}\label{oo456} \langle \ x_1, \dots, x_{2g}, \mathfrak{m} \ | \ r_i^F := \mathfrak{m}^{-1} y_i \mathfrak{m}\ z_i^{-1} \ \ \ (1 \leq i \leq 2g) \ \ \rangle, \end{equation}
of $ \pi_1(S^3 \setminus L)$. 
Notice that
$$ \frac{\partial r_i^F}{\partial x_j}= \mathfrak{m} \frac{\partial y_i }{\partial x_j}- r_i^F \frac{\partial z_i }{\partial x_j} ,$$
and consider the square matrix of the form
\begin{equation}\label{kkkkk} A_{F,W}:= \Bigl\{ \rho (\frac{\partial r_i^F}{\partial x_j} ) \Bigr\}_{ 1 \leq i,j \leq 2g} = \Bigl\{ \tau \rho ( \frac{\partial y_j }{\partial x_i} )- \rho ( \frac{\partial z_j }{\partial x_i} ) \Bigr\}_{ 1 \leq i,j \leq 2g} \in \mathrm{Mat}(2g \times 2g, \mathcal{A}_{\kappa}( \!( \tau) \!)). \end{equation}

Then, the paper \cite[\S 3]{Nos} defined a quotient group
\begin{equation}\label{oo}\mathcal{Q}_{\mathcal{A}, \kappa}:= K_1( \mathcal{A}_{\kappa}( \!( \tau) \!))/ K_1(\mathcal{A}) , \end{equation}
and if $A_{F,W}$ is an invertible matrix, we define {\it the $K_1$-Alexander invariant (with respect to $\rho$)} to be the $K_1$-class
$$ \Delta_{\rho}^{K_1} := [ \tau^{-g} A_{F,W} ] \in \mathcal{Q}_{\mathcal{A}, \kappa} . $$
On the other hand, if $A_{F,W}$ is not invertible, we define $\Delta_{\rho}^{K_1}$ to be zero.
The main theorem in \cite{Nos} shows that this $ \Delta_{\rho}^{K_1}$ does not depend on the choice of $F,W,$ and $ X $.

Then, the theorem of this section is as follows:
\begin{thm}\label{cor1155}
Let $G$ be a knot group with a Wirtinger presentation, as in Example \ref{exppp0}, and
let $\rho: \Z[\pi_1(S^3 \setminus L)] \ra \mathcal{A}_{\kappa}( \!( \tau) \!) $ satisfy ($\dagger$).

Then, $\Phi_{\rho,k} $ is invertible if and only if so is $A_{F,W}$.
In addition, if so, the $K_1$-class $[ \tau^{-g} A_{F,W} ] $ is equal to $[\Phi_{\rho, k}] $
in the quotient group $ \mathcal{Q}_{\mathcal{A}, \kappa} / \{ \tau^{n} \}_{n \in \Z}$.
\end{thm}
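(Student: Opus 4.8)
The plan is to recognize the Seifert presentation \eqref{oo456} as a deficiency-one presentation to which the Fox-derivative construction of Definition \ref{def11} applies verbatim, and then to transport the resulting class onto a Wirtinger presentation using the invariance already established in Proposition \ref{thm11}. The presentation \eqref{oo456} has the $2g+1$ generators $x_1,\dots,x_{2g},\mathfrak{m}$ and the $2g$ relators $r_1^F,\dots,r_{2g}^F$, so its Jacobi matrix has size $2g\times(2g+1)$. Deleting the column corresponding to $\mathfrak{m}$ leaves precisely $\{\rho(\partial r_i^F/\partial x_j)\}_{1\le i,j\le 2g}$, which by \eqref{kkkkk} is $A_{F,W}$. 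Hence the $K_1$-Alexander class of Definition \ref{def11} attached to \eqref{oo456}, computed with the $\mathfrak{m}$-column removed, is exactly $[A_{F,W}]\in K_1(\mathcal{A}_{\kappa}(\!(\tau)\!))/\pm\tau$.

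First I would establish that \eqref{oo456} is strongly Tietze equivalent to a Wirtinger presentation of $\pi_1(S^3\setminus L)$, choosing the Seifert surface $F$ and its spine $W$ compatibly with the chosen diagram (for instance from Seifert's algorithm applied to $D$), so that the two-complex of \eqref{oo456} and that of the Wirtinger presentation are related by a sequence of elementary expansions and collapses, each realized by one of the transformations (Ia)--(II) or its inverse. Granting this, Proposition \ref{thm11} yields both conclusions at once. Each transformation multiplies the chosen Jacobi minor by a unit or by an invertible matrix, as recorded in the proof of Proposition \ref{thm11}; combined with the choice-of-column independence of part (i), invertibility is preserved along the chain, so $A_{F,W}$ is invertible if and only if $\Phi_{\rho,k}$ is. When they are invertible, the same proposition gives $[A_{F,W}]=[\Phi_{\rho,k}]$ in $K_1(\mathcal{A}_{\kappa}(\!(\tau)\!))/\pm\tau$.

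It then remains to descend this identity to the quotient in the statement. Since $\mathcal{Q}_{\mathcal{A},\kappa}=K_1(\mathcal{A}_{\kappa}(\!(\tau)\!))/K_1(\mathcal{A})$, the group $\mathcal{Q}_{\mathcal{A},\kappa}/\{\tau^n\}_{n\in\Z}$ is the quotient of $K_1(\mathcal{A}_{\kappa}(\!(\tau)\!))$ by the subgroup generated by $K_1(\mathcal{A})$ and $\tau$. In this quotient the normalizing factor $\tau^{-g}$ is trivial, so $[\tau^{-g}A_{F,W}]=[A_{F,W}]$; moreover $-1\in\mathcal{A}^{\times}$ forces $-1\in K_1(\mathcal{A})$, so the subgroup $\pm\tau$ maps into $\langle K_1(\mathcal{A}),\tau\rangle$ and the equality $[A_{F,W}]=[\Phi_{\rho,k}]$ of the previous paragraph descends. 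Combining these gives $[\tau^{-g}A_{F,W}]=[\Phi_{\rho,k}]$ in $\mathcal{Q}_{\mathcal{A},\kappa}/\{\tau^n\}_{n\in\Z}$, as required.

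The main obstacle I anticipate is the opening step of the second paragraph: passing from ordinary Tietze equivalence, which is automatic because both present the same group, to \emph{strongly} Tietze equivalence, which is what Proposition \ref{thm11} requires. Controlling the moves at the level of the two-complexes, while keeping every intermediate presentation of deficiency one and keeping the meridian $\mathfrak{m}$ behaving correctly under $\rho$, is the delicate point (the remark for genus-one $2$-bridge knots in \S\ref{invrelS772} is exactly an instance of this). If a direct strongly Tietze chain is awkward to exhibit in general, the fallback is to invoke the independence of $[\tau^{-g}A_{F,W}]$ under the choice of $(F,W,X)$ from \cite{Nos} together with the independence of $[\Phi_{\rho,k}]$ from Theorem \ref{cor11}, and to match the two invariants on a single well-adapted choice for which the two two-complexes coincide up to one explicit elementary expansion.
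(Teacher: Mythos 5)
Your route has a genuine gap, in fact two. First, the presentation \eqref{oo456} does \emph{not} satisfy Assumption $(\dagger)$: its generators $x_1,\dots,x_{2g}$ generate $\pi_1(S^3\setminus F)$ and are nullhomologous in $S^3\setminus L$, so $\rho(x_i)$ has $\tau$-degree $0$ and is not of the form $w_i^{-1}\tau w_i$; only $\mathfrak{m}$ is meridional. Consequently Definition \ref{def11} and Proposition \ref{thm11} do not apply ``verbatim'' to \eqref{oo456}. Concretely, the column-independence argument in part (i) of Proposition \ref{thm11} needs $1-\rho(x_k)$ invertible via the geometric series for $w_k^{-1}\tau w_k$, and part (ii) needs $\rho(w)=\tau^{n_w}$ in $K_1$ for arbitrary words $w$ in the generators; both fail here. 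This also shows your intermediate claim that $[A_{F,W}]=[\Phi_{\rho,k}]$ already in $K_1(\mathcal{A}_{\kappa}(\!(\tau)\!))/\pm\tau$ overshoots: the Tietze moves relating the two presentations would introduce factors $\rho(w)$ that are only killed after further quotienting by $K_1(\mathcal{A})$ and $\{\tau^n\}$, which is exactly why the theorem is stated in $\mathcal{Q}_{\mathcal{A},\kappa}/\{\tau^{n}\}_{n\in\Z}$ and not in $K_1/\pm\tau$.

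Second, the step you flag as ``delicate'' is in fact the unproved heart of your argument: Wada's lemma gives strong Tietze equivalence only between two \emph{Wirtinger} presentations, and there is no reason (and no argument supplied) that \eqref{oo456} is strongly Tietze equivalent to a Wirtinger presentation in general; asserting that two deficiency-one presentations of the same group are related by the moves (Ia)--(II) is an Andrews--Curtis-type statement, not a formality. The paper avoids both problems entirely: it realizes each presentation as a CW structure on a space homotopy equivalent to $S^3\setminus L$, computes the Reidemeister torsion of each complex via Proposition \ref{kkkk} (obtaining $\Phi_{\rho,m}\cdot(1-\tau)^{-1}$ and $A_{F,W}\cdot(1-\tau)^{-1}$ respectively), and identifies the two torsions using simple-homotopy invariance together with Waldhausen's vanishing of the Whitehead group of knot groups; acyclicity of the complexes is a homotopy invariant, which yields the equivalence of the two invertibility conditions. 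If you want to salvage your approach, you would need to replace the strong Tietze chain by this torsion/simple-homotopy argument, or else prove the required equivalence of presentations directly, which the paper does not attempt.
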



\section{Review of the Reidemeister torsion in a $K_1$-group}\label{invrelS3}

For the proof of Theorem \ref{cor1155}, the Reidemeister torsion appearing in a $K_1$-group
plays a key rule. We will review the Reidemeister torsion. 
The definition and properties are based on \cite{Mil} or \cite[\S I.3]{Tur}.

Consider an exact sequence of length 3
$$ C_*: 0 \ra C_3 \stackrel{\partial_3}{\lra} C_2 \stackrel{\partial_2}{\lra}C_1 \stackrel{\partial_1}{\lra} C_0 \ra 0, \ \ \ $$
where $C_*$ is a finitely generated free $R$-module ($C_3$ may be zero).
Let us choose a basis $\mathcal{C}_i \subset C_i$ for all $i$ with $C_i \neq 0$.
Assume that $B_i =\mathrm{Im}(\partial_{i+1}) \subset C_i$ is free,
pick a basis $\mathcal{B}_i$ of $B_i$ and a lift $\tilde{\mathcal{B}}_i$ of $\mathcal{B}_i $ to $C_i$.
By $ \mathcal{B}_i \tilde{\mathcal{B}}_{i-1}$ we mean the collection of elements given by $\mathcal{B}_i$ and $\tilde{\mathcal{B}}_{i-1}$.
Since $C_*$ is exact, $ \mathcal{B}_i \tilde{\mathcal{B}}_{i-1}$ is indeed a basis for $C_i$.
For bases $d,e$ of a complex, denote by $[d/e]$ the invertible matrix of a basis change, i.e., $[d/e]= (a_{ij})$ where $d_i = \sum_j a_{ij} e_j$.
Then we define the Reidemeister torsion of the based acyclic complex $(C_* ,\mathcal{C}_i)$ to be
$$ \mathcal{T}( C_* , \mathcal{C}_i):= [ \tilde{\mathcal{B}}_{2}/ \mathcal{C}_3 ][ \mathcal{B}_2 \tilde{\mathcal{B}}_{1}/ \mathcal{C}_2 ]^{-1} [ \mathcal{B}_1 \tilde{\mathcal{B}}_0/ \mathcal{C}_1 ]
[ \mathcal{B}_0 /\mathcal{C}_0 ]^{-1} \in K_1 (R). $$
Meanwhile, in the case where the $R$-modules $B_i$ are not free, 
Section 3 in \cite{Tur} gives a definition of the torsion of $C_*$.
Since this paper does not consider such a case in details, we omit the details. 

We will study homology groups in local coefficients.
Let $X$ be a connected CW complex.
Denote the universal covering space of $X$ by $\tilde{X}$.
We regard the chain complex of space, $C_*(\tilde{X})$, as a chain complex of right $\Z[\pi_1(X)]$-modules,
where the $\Z[\pi_1(X)]$-module structure is defined via covering transformations.
Given a ring homomorphism $\rho : \Z[\pi_1(X)] \ra R$,
we can therefore consider the chain
complex $C_*(X;R)=C_*( \tilde{X}) \otimes_{\Z[\pi_1(X)]} R$.
We denote its homology by $H_* (X;R)$.

We now suppose that
$X$ is of finite type and dim$X \leq 3$.
Then, if the homology $ H_i (X;R)$ is not zero for some $i$, we write $\mathcal{T}(X, \rho) =0.$
Otherwise,
denote the $i$-cells of $X$ by $\sigma_i^1, \dots, \sigma^{r_i}_i$, and choose an orientation for each cell
$\sigma_i^j$, and also pick a lift $\tilde{\sigma}_i^j$ for each cell $\sigma_i^j$ to the universal cover $\widetilde{X}$.
Since the set $ \{\tilde{\sigma}_i^1 ,\dots, \tilde{\sigma}_i^{r_i} \} $ makes a basis $\mathcal{C}_i$ of $ C_i(X;R)$, we can define
\begin{equation}\label{jjj542} \mathcal{T} (C_i(X;R), \{ \mathcal{C}_i \}) \in K_1(R). \end{equation}
Moreover, consider the quotient of $K_1(R)$ passage by the image $\{ \pm \rho(g) \}_{g \in \pi_1(X)} $.
Then, as is known, the class
\begin{equation}\label{jjj5423} \mathcal{T}(X , \rho ):= [\mathcal{T} (C_i(X;R), \{ \mathcal{C}_i \})] \in K_1(R)/ \pm \rho(\pi_1(X) ) \end{equation}
depends only on the simple homotopy type of $X$ and the homomorphism $\rho : \pi_1(X) \ra R $. 

Next, we give a procedure for computing $\mathcal{T}(X , \rho ) $, following \cite[p. 8]{Tur}.
Take subsets, $\xi _i \subset \{ 1,2,\dots, \mathrm{rank}(C_i) \}$
so that $\xi_0 =\emptyset$, and
denote $(\xi_0,\xi_1,\dots, \xi_m)$ by $\xi $.
For $i,j,k$, define the matrix $\{ a^i_{jk}\}_{j,k}$ by considering only the $\xi_i$-columns of $A_i$
and with the $\xi_{i-1}$-rows removed.
Such a matrix chain $\xi$ is called {\it a $\mathcal{T}$-chain} if $A_1(\xi), \dots, A_m(\xi)$ are square matrices.
The following is the generalization of Turaev's Theorem 2.2 to the noncommutative setting, and is stated in \cite[Theorem 2.1]{Fri}.

\begin{prop}[{cf \cite[Theorem 2.2]{Tur}. See also \cite[Theorem 2.1]{Fri}}]\label{kkkk}
Let $\xi$ be a $\mathcal{T}$-chain such that $A_i(\xi)$ is invertible for all odd $i$.
Suppose that every $B_i$ is free.
Then, $A_i(\xi$) is invertible for all even i if and only if $H_*(C_*(X;R))=0$.
If $H_*(C_*(X;R))=0$, then
$$\mathcal{T}(C_i(X;R), \{ \mathcal{C}_i\})=\varepsilon \prod_{i=0}^m A_i(\xi)^{(-1)^i} \in K_1(R) \ \ \mathrm{ for \ some \ } \varepsilon \in \pm 1.$$
\end{prop}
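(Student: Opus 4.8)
The plan is to derive the torsion formula by constructing an explicit basis change at each level of the acyclic complex, matching it against the matrix-chain data $A_i(\xi)$. First I would exploit the freeness hypothesis on each $B_i = \mathrm{Im}(\partial_{i+1})$ to split the complex. For a $\mathcal{T}$-chain, the subset $\xi_i \subset \{1,\dots,\mathrm{rank}(C_i)\}$ selects a candidate basis for a complement to $B_i$ inside $C_i$; the complementary indices should span $B_i$ under $\partial_i$. The key claim to establish is that $A_i(\xi)$, the square matrix cut out by the $\xi_i$-columns with $\xi_{i-1}$-rows deleted, represents the isomorphism $\partial_i$ restricted to the span of the $\xi_i$-cells onto $B_{i-1}$, expressed in the chosen bases. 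Invertibility of $A_i(\xi)$ for odd $i$ is given; the first half of the statement is then the linear-algebra fact that exactness forces the even-indexed maps to be isomorphisms exactly when $H_*=0$, which I would phrase as a rank-count over $R$ using the assumption that $R^r \not\cong R^s$ for $r \neq s$.

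Next I would assemble the torsion. Following Turaev's scheme, the idea is to take $\tilde{\mathcal{B}}_i$ to be the lift given precisely by the $\xi_i$-indexed cells, so that $\partial_{i+1}(\tilde{\mathcal{B}}_i) = \mathcal{B}_i$ up to the matrix $A_{i+1}(\xi)$. With this choice the basis-change matrices $[\mathcal{B}_i \tilde{\mathcal{B}}_{i-1}/\mathcal{C}_i]$ appearing in the definition of $\mathcal{T}(C_*,\mathcal{C}_i)$ become block-triangular, with the relevant diagonal block equal to $A_i(\xi)$ (or its transpose/permutation) and the off-diagonal blocks contributing elementary matrices. Invoking the Whitehead Lemma, as the excerpt already does in the proof of Proposition \ref{thm11}, each elementary factor is trivial in $K_1(R)$, and each permutation of rows or columns contributes only a sign $\pm 1$. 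I would then telescope the alternating product in the definition of $\mathcal{T}$, observing that the lifted complement bases $\tilde{\mathcal{B}}_{i-1}$ cancel in consecutive factors, leaving precisely $\prod_i A_i(\xi)^{(-1)^i}$ up to an overall sign $\varepsilon \in \pm 1$.

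The main obstacle will be bookkeeping the signs and the precise correspondence between the combinatorial index sets $\xi_i$ and the algebraic splitting, since the matrix $A_i(\xi)$ is defined by deleting $\xi_{i-1}$-rows rather than directly as the matrix of $\partial_i$ on a complement. I would want to verify carefully that deleting the $\xi_{i-1}$-rows corresponds to projecting onto the complement of $B_{i-1}$ spanned by the $\xi_{i-1}$-cells, so that what survives is genuinely the isomorphism onto $B_{i-1}$ in the lifted basis. Because $R$ is noncommutative, I must also keep the order of matrix factors fixed throughout and rely only on the commutativity built into $K_1(R) = GL(R)_{\mathrm{ab}}$ when rearranging; this is exactly where the passage to $K_1$ (rather than to a determinant) is essential. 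Finally, since the result is asserted only up to $\varepsilon \in \pm 1$ and we work in $K_1(R)$, I need not track permutation signs beyond recording that they land in $\pm 1$, which keeps the sign discussion contained. I would close by noting that this is the noncommutative analogue of Turaev's Theorem 2.2, so the argument is a direct transcription of \cite[Theorem 2.1]{Fri} once the freeness of the $B_i$ secures the splittings.
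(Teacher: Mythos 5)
The paper offers no proof of this proposition at all: it is quoted as a known result, with the commutative case attributed to \cite[Theorem~2.2]{Tur} and the noncommutative generalization to \cite[Theorem~2.1]{Fri}. Your sketch reproduces the standard argument behind those citations (take $\tilde{\mathcal{B}}_{i-1}$ to be the $\xi_i$-indexed cells, observe that the transition matrices $[\mathcal{B}_{i-1}\tilde{\mathcal{B}}_{i-2}/\mathcal{C}_{i-1}]$ are block-triangular with diagonal blocks $A_i(\xi)$ and an identity, kill the off-diagonal part by the Whitehead lemma, and telescope), and it is essentially sound. One clause is backwards and should be corrected: it is the $\xi_i$-indexed cells, not their complement, that map under $\partial_i$ onto a basis of $B_{i-1}$ (the complementary rows are what you delete to read off $A_i(\xi)$); your next sentence states the correct version, so this is a slip rather than a gap, but the ``if and only if'' half does need the freeness of the $B_i$ and the paper's standing assumption that $R^r\not\cong R^s$ for $r\neq s$ to make the rank count meaningful over a noncommutative $R$.
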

\subsection{Proof of Theorem \ref{cor1155}}\label{231}
\begin{proof}[Proof of Theorem \ref{cor1155}]
We first recall the facts on invariance of Reidemeister torsion.
Let $Y$ be a finite connected finite CW complex.
According to \cite[Theorem 9.1]{Tur},
if there is a cellular map $f :Y \ra X$ which is homotopy equivalent, and $ H_*(X;R)=0$ and the Whitehead group of $\pi_1 (Y)$ is zero, then
$ H_*( Y;R)=0$ and the associated torsion $ \mathcal{T} (Y, \rho ')$ is equal to $ \mathcal{T} (X, \rho )$ in
$K_1( R)/ \pm \rho( \pi_1(X)).$ Here, $\rho ' = \rho \circ f_*$.
Furthermore, if $X$ is homotopic to the knot complement $S^3 \setminus L$,
then the Whitehead group of $\pi_1 (X)$ vanishes by Waldhausen \cite{Wal}.
Therefore, it is enough for the proof to find $X$ and $Y$ whose homotopy type are $S^3 \setminus L$
such that $\mathcal{T} (X, \rho) \cdot (1 - \tau)$ equals $ \Phi_{\rho,m } $ and
$\mathcal{T} (Y, \rho) (1- \tau)$ equals $ [A_{F,W}]$

First, we take the CW-complex $X$ corresponding with the Wirtinger presentation. Namely,
$X$ consists of a single vertex, $m$ edges labeled by the generators $x_1, \dots, x_m$ and $(m-1)$ 2-cells attached by the relations $r_1, \dots, r_{m-1}$.
As is known, we can easily verify that $W$ is homotopic to $S^3 \setminus L$.
We regards these $x_1,\dots, x_m$ and $r_{1} ,\dots, r_{m-1}$ as cells of $X$.
The cellular complex is written in
\begin{equation}\label{ooo24} 0 \ra \bigoplus_{j=1}^{m-1} \mathcal{A}_{\kappa}( \!( \tau) \!) r_j \stackrel{\partial_2}{\lra} \bigoplus_{i=1}^{m} \mathcal{A}_{\kappa}( \!( \tau) \!)x_i \stackrel{\partial_1}{\lra}
\mathcal{A}_{\kappa}( \!( \tau) \!) \ra 0,\end{equation}
as left $\mathcal{A}_{\kappa}( \!( \tau) \!) $-modules where the boundary maps are given by
$$ \partial_2 (r_j)= \sum_i \rho \Bigl( \frac{\partial r_j}{ \partial x_i}\Bigr) x_i , \ \ \ \mathrm{and} \ \ \ \partial_1 ( x_i )= 1- \rho (x_i) .$$
Notice that the restriction of $\partial_1 $ on $\mathcal{A}_{\kappa}( \!( \tau) \!) x_m $ is $1- \rho (x_m)= 1 - w_m^{-1} \tau w_m$; hence, it is invertible. Therefore, the acyclicity of \eqref{ooo24} is equivalent to the invertibility of $ \Phi_{\rho, k}$.
If we set $ \xi_1 := \{ 1\}$ and $\xi_2 := \{ 1,\dots, m\}$
as in Proposition \ref{kkkk}, the Reidemeister torsion $\mathcal{T} (X, \rho) $ is given by $ \Phi_{\rho,m } \cdot (1 - \tau)^{-1}$, as required.

Next, we let $Y$ be the CW complex corresponding with the presentation \eqref{oo456} in a similar way.
Then, it is shown \cite[Section 2.3]{Tro} that this $Y$ is known to be homeomorphic to $S^3 \setminus L$, and the
chain complex is given by
$$ 0 \ra \bigoplus_{j=1}^{2g} \mathcal{A}_{\kappa}( \!( \tau) \!) r_j^F \stackrel{\partial_2}{\lra} \bigoplus_{i=1}^{2g} \mathcal{A}_{\kappa}( \!( \tau) \!) x_i \oplus \mathcal{A}_{\kappa}( \!( \tau) \!) \mathfrak{m} \stackrel{\partial_1}{\lra}
\mathcal{A}_{\kappa}( \!( \tau) \!) \ra 0,$$
where the boundary maps are given by
$$ \partial_2 (r_j)= (1 -\rho(y_j) ) \mathfrak{m}+ \sum_i \rho \Bigl( \frac{\partial r_j^F}{ \partial x_i} \Bigr) x_i , \ \ \ \mathrm{and} \ \ \ \partial_1 ( x_i )= 1- \rho (x_i), \ \ \ \partial_1 ( \mathfrak{m} )= 1 -\tau .$$
Notice that the restriction of $\partial $ on $\mathcal{A}_{\kappa}( \!( \tau) \!) \mathfrak{m} $
is $1- \tau $ invertible. 
Therefore, similarly, Proposition \ref{kkkk} implies that the Reidemeister torsion $\mathcal{T} (Y, \rho) $ is $ A_{F,W} \cdot (1 - \tau)^{-1}$ by definition, as required.
This completes the proof.
\end{proof}

\begin{proof}[Proof of Proposition \ref{prop11}]
It is proven in the paper \cite{Nos} that, $K$ is fibered, if and only if the matrix $A_{F,W}$ is shown to be invertible for any $\rho$.
Thus, Proposition \ref{prop11} immediately deduces the proof.
\end{proof}

\section{Relation to the torsions of cyclic coverings}\label{def732}
Throughout this section, we assume the existence of $m \in \mathbb{N}$ such that $ \kappa^m= \mathrm{id}_{\mathcal{A}}$.
In this case, we suggest a relation to the torsions of the regular $m$-fold cyclic covering spaces $E_L^m$.
Here the $m$-fold cyclic covering $ p:E_L^m \ra S^3 \setminus L$ is associated with the surjection $ \pi_1(S^3 \setminus L) \ra \Z/m $.

For the purpose, we begin reviewing a ring homomorphism in \cite{Nos}.
For $\ell \leq m$ and $a \in \mathcal{A}$, we let $ \mathcal{D}_\ell (a)$ be a diagonal $(\ell \times \ell)$-matrix of the form
\[ \mathcal{D}_\ell (a):= \left(\begin{array}{rrrr} \kappa^\ell (a) & 0 & \cdots & 0\\
0 & \kappa^{\ell -1}(a) & \cdots & 0 \\
\vdots & \vdots & \ddots & \vdots \\
0 & 0 & \cdots & \kappa^{1}(a)
\end{array}\right) , \]
and let $\mathbb{O}_{s,t} $ be the zero $(s \times t )$-matrix. We define a square matrix
\begin{equation}\label{ooo222}M_{\ell}(a):= \left(\begin{array}{rr} \mathbb{O}_{m - \ell ,\ell} & \ \mathcal{D}_{m-\ell} ( \kappa^{\ell +1}(a)) \\
\mathcal{D}_\ell (a) & \!\!\!\!\!\mathbb{O}_{\ell, m - \ell}
\end{array}\right) \in \mathrm{Mat}(m \times m ; \mathcal{A}) . \end{equation}
Consider the Laurent polynomial ring $\mathcal{A}_{\rm id }( \!( t) \!) $, where $t$ is a commutative indeterminate.
Then, the paper \cite{Nos} introduced a ring homomorphism $\Upsilon$ defined by setting
\begin{equation}\label{ooo2}\Upsilon: \mathcal{A}_{\kappa}( \!( \tau) \!) \lra \mathrm{Mat}(m \times m ; \mathcal{A}_{\rm id }( \!( t) \!)) ; \ \ \ \ \ \sum_{j=k} a_j \tau^j \longmapsto \sum_{j=k} M_{j}(a_j) t^j . \end{equation}
Hence, we can consider the pushforwards of the $K_1$-classes: $ \Upsilon_*( \mathcal{T}(S^3 \setminus L, \rho )) $ and $\Upsilon_*( A_{F,W}) $. Here, by the Morita invariance on $K_1$
$$ \mathcal{I}: K_1( \mathrm{Mat}(m \times m ; \mathcal{A}_{\rm id }( \!( t) \!)) ) \cong K_1(\mathcal{A}_{\rm id }( \!( t) \!)) , $$
the $K_1$-classes are regarded as quotient elements of $K_1(\mathcal{A}_{\rm id }( \!( t) \!) ). $

Meanwhile, notice that the image of the composite $\rho \circ p_* : \Z[ \pi_1(E_L^m)] \ra \mathcal{A}_{\kappa}( \!( \tau) \!) $
is contained in $ \mathcal{A}_{\kappa}( \!( \tau^m ) \!) = \mathcal{A}_{\rm id }( \!( t ) \!)$. Thus, we can define
the torsion $ \mathcal{T}(E_L^m, \rho \circ p_* ) $ in $K_1( \mathcal{A}_{\rm id }( \!( t ) \!) ) $.
\begin{thm}\label{thm346} Let $L$ be a knot.
Under the above situation, the torsion of $E_L^m $ is equal to the pushforward of the torsion of $S^3 \setminus L$, that is,
$$\mathcal{I} \circ \Upsilon_*( \mathcal{T}(S^3 \setminus L, \rho )) = \mathcal{T}(E_L^m, \rho \circ p_* ) \in K_1( \mathcal{A}_{\rm id }( \!( t ) \!) ) / \pm \rho \circ p_* (\pi_1( E_L^m )). $$
\end{thm}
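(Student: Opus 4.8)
The plan is to produce both torsions from a single chain complex over $\mathcal{A}_{\kappa}(\!(\tau)\!)$ and then push it forward, identifying the pushforward with the cellular chain complex of $E_L^m$. First I would take the finite CW complex $X$ homotopy equivalent to $S^3\setminus L$ coming from a Wirtinger presentation, exactly as in the proof of Theorem \ref{cor1155}, so that $C_*(S^3\setminus L;\mathcal{A}_{\kappa}(\!(\tau)\!))=C_*(X;\mathcal{A}_{\kappa}(\!(\tau)\!))$ is the complex \eqref{ooo24} whose boundary matrices have entries in $\mathcal{A}_{\kappa}(\!(\tau)\!)$. Applying the ring homomorphism $\Upsilon$ of \eqref{ooo2} entrywise turns these into block matrices over $\mathrm{Mat}(m\times m;\mathcal{A}_{\mathrm{id}}(\!(t)\!))$; after the Morita equivalence $\mathcal{I}$ they become matrices over $R:=\mathcal{A}_{\mathrm{id}}(\!(t)\!)$, yielding a based free $R$-complex I will call $D_*$.

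The central step is a chain-level identification
$$ D_* \;\cong\; C_*(E_L^m;R) $$
as based $R$-complexes, up to reordering of basis cells and left multiplication by units $\pm\,\rho\circ p_*(g)$. I would prove this by lifting the cell structure of $X$ to the $m$-fold cyclic cover: each cell $\sigma$ of $X$ has exactly $m$ lifts in $E_L^m$, permuted cyclically by the deck transformation, and these correspond to the $m$ coordinates produced by $\mathcal{I}\circ\Upsilon$. Using $\rho(\mathfrak m)=\tau$ and the fact that $\mathfrak m$ projects to a generator of $\Z/m$, I would compute the boundary map of $E_L^m$ directly on the lifted cells and compare it, entry by entry, with the block $\Upsilon(\rho(\partial r/\partial x))$. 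The off-diagonal placement and the $\kappa$-shifts built into $\mathcal{D}_\ell$ and $M_j(a)$ in \eqref{ooo222} are exactly what records how a lift of a cell passes from one sheet to the next under the deck action, so the matching reduces to checking these crossings against the prescribed shift pattern.

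Granting this identification, the conclusion follows from the naturality of Reidemeister torsion. A ring homomorphism carries a based acyclic complex to a based complex and its torsion to the image torsion, and $\mathcal{I}$ is compatible with the torsion formula of Proposition \ref{kkkk}, since both torsions are the alternating product $\prod_i A_i(\xi)^{(-1)^i}$ and both $\Upsilon$ and $\mathcal{I}$ are multiplicative. In particular $D_*$ is acyclic if and only if $C_*(X;\mathcal{A}_{\kappa}(\!(\tau)\!))$ is, so the hypothesis that $\mathcal{T}(S^3\setminus L,\rho)$ is defined transfers to $E_L^m$, and
$$ \mathcal{I}\circ\Upsilon_*\bigl(\mathcal{T}(S^3\setminus L,\rho)\bigr)\;=\;\mathcal{T}(D_*)\;=\;\mathcal{T}(E_L^m,\rho\circ p_*)\quad\text{in } K_1(R). $$
Passing to the quotient by $\pm\,\rho\circ p_*(\pi_1(E_L^m))$ absorbs the reordering and unit ambiguities introduced by the chain-level identification, giving the stated equality.

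I expect the main obstacle to be the second step: the bookkeeping that matches lifted cells, their orientations, and the deck-transformation crossings with the precise off-diagonal and $\kappa$-shift structure of $M_j(a)$. Everything else is formal naturality of torsion, but this combinatorial identification is where the specific form of $\Upsilon$ must be used and where the sign and ordering ambiguities—hence the quotient appearing in the statement—are genuinely unavoidable.
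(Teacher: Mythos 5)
Your proposal is sound, and its engine is the same as the paper's: the block structure of $\Upsilon$ in \eqref{ooo2} is exactly the bookkeeping of the $m$ sheets of the cyclic cover, so applying $\Upsilon$ entrywise to the boundary matrices of a complex for $S^3 \setminus L$ produces the boundary matrices of a complex for $E_L^m$. The execution, however, differs in two genuine ways. First, the paper does not use the Wirtinger complex \eqref{ooo24} here; it uses the Seifert-surface presentation \eqref{oo456}, applies the Reidemeister--Schreier method to obtain an explicit one-vertex presentation of $\pi_1(E_L^m)$ (generators $x_i^{(k)}$ and $\overline{\mathfrak{m}}$, relators $r_i^{(k)}$), and writes down the corresponding chain complex of the cover directly. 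Second, rather than invoking naturality of torsion on a chain-level identification, the paper computes both torsions explicitly via Proposition \ref{kkkk} --- obtaining $\mathcal{T}(E_L^m, \rho \circ p_*) = (1-t^m)^{-1}\mathcal{J}$ and $\mathcal{T}(S^3\setminus L,\rho) = (1-\tau)^{-1}A_{F,W}$ --- and then verifies the two identities $\Upsilon(A_{F,W}) = \mathcal{J}$ (an entrywise Fox-calculus check, which is precisely the algebraic shadow of your cell-lifting step) and $\Upsilon(1-\tau) = 1-t^m$ in $K_1$. The paper's choice of presentation is what keeps this clean: in \eqref{oo456} only $\mathfrak{m}$ maps to a generator of $\Z/m$ while the $x_i$ lie in the kernel, so the cover's complex stays one-vertex and the factor $1-t^m$ appears immediately; in your Wirtinger setting every generator maps to $1 \in \Z/m$, so the lifted CW structure has $m$ vertices, $C_0$ has rank $m$ after the Morita identification, and the choice of $\mathcal{T}$-chain in Proposition \ref{kkkk} becomes noticeably heavier. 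What your route buys is conceptual economy --- a Shapiro-lemma identification plus functoriality of torsion, with no need to present $\pi_1(E_L^m)$ at all --- and the deferred verification you correctly flag as the main obstacle is exactly the computation the paper carries out; in both arguments the quotient by $\pm\rho\circ p_*(\pi_1(E_L^m))$ absorbs the ambiguities of cell ordering and of lifts.
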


\begin{proof}We first analyze $ \pi_1(E_L^m)$.
Recall from \eqref{oo456} the presentation
$$ \pi_1(S^3 \setminus L) \cong \langle \ x_1, \dots, x_{2g}, \mathfrak{m} \ | \ r_i^F := \mathfrak{m}^{-1} y_i \mathfrak{m}\ z_i^{-1} \ \ \ (1 \leq i \leq 2g) \ \ \rangle. $$
For $ k \in \Z/m $, let $ x_i^{(k)}$ be a copy of $ x_i$,
and $y_i^{(k)}$ be the word obtained by replacing $x_i$ by $ x_i^{(k)}$ in the word $y_i$.
We similarly define the word $ z_i^{(k)}$.
Then, by a Reidemeister-Schreier method (see, e.g., \cite{LS,Kab}),
$ \pi_1(E_L^m)$ is presented by
$$ \langle \ x_1^{(k)}, \dots, x_{2g}^{(k)}, \overline{\mathfrak{m}} \ \ \ ( k \in \Z/m) \ | \ r_i^{(k )} := \overline{\mathfrak{m}}^{-1} y_i^{(k)} \overline{\mathfrak{m}}\ (z_i^{(k+1)})^{-1} \ \ \ (1 \leq i \leq 2g, \ k \in \Z/m) \ \ \rangle, $$
and the injection $ p_*: \pi_1(E_L^m) \ra \pi_1(S^3 \setminus L)$ is represented by the correspondence
$ x_i^{(k)} \mapsto x_i, \ \overline{\mathfrak{m}} \mapsto \mathfrak{m}^m . $

Then, similarly to \eqref{ooo24}, we have the cellular chain complex of $C_* (E_L^m ; \mathcal{A}_{\rm id }( \!( t ) \!) )$ as
$$ 0 \ra \bigoplus_{j=1}^{m} \bigoplus_{k=1}^{2g} \mathcal{A}_{\rm id }( \!( t ) \!) r_j^{(k)} \stackrel{\partial_2}{\lra} \mathcal{A}_{\rm id }( \!( t ) \!)\overline{\mathfrak{m} } \oplus\bigoplus_{i=1}^{m} \bigoplus_{k=1}^{2g} \mathcal{A}_{\rm id }( \!( t ) \!) x_i^{(k)} \stackrel{\partial_1}{\lra}
\mathcal{A}_{\rm id }( \!( t ) \!) \ra 0,$$
where the boundary maps are given by
$$ \partial_2 (r_j^{(k)})= (1 -\rho(y_j^{(k)}) ) \overline{\mathfrak{m} } + \sum_{i,k'} \rho \Bigl( \frac{\partial r_j^{(k)}}{ \partial x_i^{(k')}} \Bigr) x_i^{(k')} , \ \ \ \mathrm{and} \ \ \ \partial_1 ( x_i^{(k)} )= 1- \rho (x_i^{(k)}), \ \ \ \partial_1 ( \overline{\mathfrak{m} } )= 1 -t .$$
Let $\mathcal{J}$ be the $(2gm \times 2gm)$-matrix $\{ \rho \bigl( \partial r_i^{(k )}/ \partial x_j^{(k' )} \bigr) \}_{i,j \leq g, \ \ \ k,k' \leq m}$. Then, similarly to the proof of Theorem \ref{cor1155},
Proposition \ref{kkkk} implies
\begin{equation}\label{ooo92} \mathcal{T}(E_L^m, \rho \circ p_* ) = (1-t^m)^{-1} \cdot \mathcal{J} \in K_1( \mathcal{A}_{\rm id }( \!( t ) \!) ) / \pm \rho \circ p (\pi_1( E_L^m )). \end{equation}

On the other hand, by the definition of $\Upsilon$, we can easily check
$$ \Upsilon ( \rho (\frac{\partial r_i^F}{ \partial x_j})) = \Bigl\{\rho ( \frac{\partial r_i^{(s )}}{\partial x_j^{(t )}}) \Bigr\}_{1 \leq s,t \leq m} \in \mathrm{Mat}(m \times m ; \mathcal{A}_{\rm id }( \!( t) \!)) , $$
for any $i,j \leq m. $
Therefore, as $(2gm \times 2gm)$-matrices, $ \Upsilon ( A_{F,W}) = \mathcal{J}$, where $A_{F,W}$ is the matrix in \eqref{kkkkk}.
Notice that $ \Upsilon ( 1-\tau) = 1-t^m$ in $ K_1( \mathcal{A}_{\rm id }( \!( t ) \!) ) $,
and recall $ \mathcal{T}(S^3 \setminus L, \rho ) = (1-\tau)^{-1} A_{F,W} $ by the proof of Theorem \ref{cor1155}. Hence, combing those with \eqref{ooo92} deduces the required equality.
\end{proof}

\section{Reciprocity of some $K_1$-Alexander invariants}\label{invdefS342}
As is well-known, the (classical) Alexander polynomial of a knot has symmetry, i.e., it can be expanded as $\sum_{ i= -m}^{m} a_i t^{i}$ such that $ a_i \in \Z$ and $ a_{-i} = a_i$.
Such a symmetry is called {\it Reciprocity}.
As a generalization, if we choose an appropriate representation,
the twisted Alexander polynomial also has reciprocity in some sense; see, e.g., \cite{FKK,FV,Kit,Kitano}.
Moreover, reciprocity of some Reidemeister torsions is generalized for $K_1$-groups; see
\cite[\S 10]{Mil} or \cite[Theorem 14.1]{Tur}.
Thus, it is reasonable to ask reciprocity on the $K_1$-classes $\Delta_{\rho}^{K_1}$.
Unfortunately, some papers on reciprocity require some conditions to show duality theorems.
Similarly, the author could not show directly reciprocity for every $\Delta_{\rho}^{K_1}$; however, this paper will observe (Theorem \ref{pro366}) a reciprocity of the pushforward by the ring homomorphism $\Upsilon_*$, under a certain situation.
Here the point is to find a situation applicable to the theorem of Turaev.

The situation in this section is described as follows:

\

($\star$) As in Example \ref{exppp0},
we consider a group homomorphism $h: \pi_1(S^3 \setminus L) \ra H \rtimes \Z$ such that $H$ is a group of finite order, and $h (x_i) =(g_i, 1) $ for some $g_i \in H$.
Let $\mathcal{A} $ be the group ring $\Q [H]$ over $\Q$.
Let $m \in \mathbb{Z}_{\geq 0}$ be the minimal such that $ \kappa^m= \mathrm{id}_{\mathcal{A}}$.

\

Starting from the situation $(\star)$, we discuss some involutions and observe some $K_1$.
In this section, let $ \mathcal{A}$ be $\Q[H] $, and consider the (skew) Laurent polynomial ring $ \mathcal{A}_{\kappa}[ \tau^{\pm 1} ] $, instead of Novikov rings.
Since $ \mathcal{A}= \Q[H] $ is semi-simple,
the Wedderburn theorem immediately implies that there are division rings $D_1, \dots, D_m$ over $\Q$ and integers $ n_1, \dots, n_k$ which ensure the ring isomorphism
$$ \Q[H] \cong \bigoplus_{i : 1 \leq i \leq k} \mathrm{Mat}(n_i \times n_i ; D_i). $$
For a division ring $D$, let $ D(t)$ be the fractional field of $D[t^{\pm 1} ]$ with $\bar{t}=t^{-1}$
and $\iota_D: D[t^{\pm 1} ] \ra D(t)$ be the inclusion.
Then, the direct sum $ \oplus \iota_{D_i}$ gives rise to
$$ (\oplus \iota_{D_i})_*: \mathrm{Mat}(m \times m ; \mathcal{A}_{\rm id }[ t^{\pm 1} ]) \lra \bigoplus_{i : 1 \leq i \leq k} \mathrm{Mat}(n_i m \times n_im ; D_i (t)). $$
Formally, we will denote by $\Q[H](t)$ by the direct sum $ \oplus_{i=1}^k D_i(t)$. Then, Morita invariance on $K_1$-groups again implies the isomorphisms
$$ K_1( \bigoplus_{i : 1 \leq i \leq k} \mathrm{Mat}(n_i m \times n_im ; D_i (t))) \cong
\bigoplus_{i : 1 \leq i \leq k} K_1 (D_i (t) ) \cong K_1( \Q[H](t)) . $$
By the assumption ($\star$), $A_{F,W}$ in \eqref{kkkkk} is regarded as
a matrix over the (uncompleted) Laurent polynomial ring $ \Q[H]_{ \kappa} [\tau^{\pm 1}]$.
In the same way as \eqref{ooo2}, consider the ring homomorphism $\Upsilon$ defined by setting
\begin{equation}\label{ooo299}\Upsilon: \mathcal{A}_{\kappa}[ \tau^{\pm 1} ] \lra \mathrm{Mat}(m \times m ; \mathcal{A}_{\rm id }(t^{\pm 1} )) ; \ \ \ \ \ \sum_{i=k} a_i \tau^i \longmapsto \sum_{i=k} M_{i}(a) t^i , \notag \end{equation}
where $M_j(a_j) $ is defined in \eqref{ooo222}.
Then, the pushforward $(\oplus \iota_{D_i})_*\circ \Upsilon_*(A_{F,W}) $ can be regarded over $\Q[H](t)$.
Thus, $\Upsilon_*(A_{F,W})$ can be considere to be an element of the $K_1(\Q[H](t))$, where we omit writing $  (\oplus \iota_{D_i})_* $.
\begin{thm}\label{pro366} 
Let $\mathfrak{l} \in \pi_1(S^3 \setminus L)$ be the preferred longitude of the knot $L$.
Suppose the above situation ($\star$), and invertibility of the pushforward $\Upsilon_*(A_{F,W}) $ over $\Q[H](t) $. In addition, we assume that either (a) $ \Upsilon ( 1- \rho ( \mathfrak{l} )) $ is an invertible matrix or (b) $ \rho (\mathfrak{l} )= 1 \in \Q[H] $.

Then, under the isomorphism $  (\oplus \iota_{D_i})_* $, the following equality holds:
$$ \overline{\Upsilon_*( A_{F,W}) }= \Upsilon_*(A_{F,W}) \in \frac{K_1(\Q[H](t) )}{\{ K_1(\Q[H]) , t^{\ell} \}_{\ell \in \Z}}.
$$
\end{thm}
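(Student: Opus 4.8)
The plan is to recognise $\Upsilon_*(A_{F,W})$ as a genuine Reidemeister torsion of a $3$-manifold and then apply the Milnor--Turaev duality theorem. By the proof of Theorem~\ref{cor1155} one has $A_{F,W}=(1-\tau)\,\mathcal{T}(S^3\setminus L,\rho)$ in $K_1$, and Theorem~\ref{thm346} together with $\Upsilon(1-\tau)=1-t^m$ shows that, after $\Upsilon_*$ and the Morita identification,
$$ \Upsilon_*(A_{F,W}) = (1-t^m)\cdot \mathcal{T}(E_L^m,\rho\circ p_*) \in K_1(\Q[H](t)), $$
where $E_L^m$ is the compact orientable $m$-fold cyclic cover of the knot exterior and the coefficients now lie in the untwisted ring $\Q[H]_{\mathrm{id}}(t)=\bigoplus_i D_i(t)$. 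Passing through $\Upsilon$ is exactly what we want, since it untwists $\kappa$, so that the involution $t\mapsto t^{-1}$, $h\mapsto h^{-1}$ becomes the ordinary conjugate--transpose involution on each Wedderburn block $\mathrm{Mat}(n_im;D_i(t))$. A first elementary observation is that the factor $1-t^m$ is self-conjugate in the relevant quotient, because $\overline{1-t^m}=-t^{-m}(1-t^m)$ and $-t^{-m}\in\{K_1(\Q[H]),t^{\ell}\}$. Hence the statement reduces to the reciprocity
$$ \mathcal{T}(E_L^m,\rho\circ p_*)=\overline{\mathcal{T}(E_L^m,\rho\circ p_*)}\in K_1(\Q[H](t))/\{K_1(\Q[H]),t^{\ell}\}_{\ell\in\Z}. $$

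Next I would invoke the duality theorem for Reidemeister torsion (Turaev, Theorem~14.1; see also Milnor, \S10) for the compact orientable $3$-manifold $E_L^m$, whose coefficient ring $\bigoplus_i D_i(t)$ is a finite product of division rings carrying the above involution; by hypothesis $\Upsilon_*(A_{F,W})$ is invertible, so the twisted chain complex is acyclic and $\mathcal{T}(E_L^m,\rho\circ p_*)$ is defined. Because $\dim E_L^m=3$, Poincar\'e--Lefschetz duality matches $C_*(E_L^m)$ with the conjugate dual of $C_{3-*}(E_L^m,\partial E_L^m)$, so the theorem produces a relation of the shape $\mathcal{T}(E_L^m)=\pm\,\overline{\mathcal{T}(E_L^m,\partial E_L^m)}$. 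The difference between the absolute torsion and the relative one is controlled by the boundary: $\partial E_L^m$ is a single torus with $\pi_1=\langle\overline{\mathfrak m},\mathfrak l\rangle$, where $\overline{\mathfrak m}=\mathfrak m^m$ maps to $t$ and $\mathfrak l$ maps to $\rho(\mathfrak l)\in\Q[H]$. Part of the verification here is routine bookkeeping: matching the conjugate representation with $\rho\circ p_*$ (keeping track of how $g\mapsto g^{-1}$ permutes the Wedderburn factors), and checking that the sign $\pm$ and any power of $t$ created by the duality lie in the indeterminacy subgroup $\{K_1(\Q[H]),t^{\ell}\}$.

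The heart of the argument, and the main obstacle, is the boundary/longitude contribution, which is exactly what the two hypotheses (a) and (b) are designed to handle. Since $\overline{\mathfrak m}\mapsto t$ with $1-t$ invertible over each $D_i(t)$, the meridian direction is harmless; the longitude, however, contributes a correction governed by $1-\rho(\mathfrak l)$. Under (a), $\Upsilon(1-\rho(\mathfrak l))$ is invertible, so this correction is a unit lying in the $t$-free part, i.e.\ its class sits in $K_1(\Q[H])$ and is absorbed by the quotient; the reciprocity then follows immediately from the duality relation. Under (b), $\rho(\mathfrak l)=1$, the representation extends over the closed $3$-manifold obtained by Dehn filling $E_L^m$ along $\mathfrak l$, to which the boundaryless form of the duality theorem applies; a Mayer--Vietoris computation then shows that $\mathcal{T}(E_L^m)$ differs from the torsion of this closed manifold only by the torsion of the attached solid torus, which is a power of $(1-t)$ and is again absorbed into $\{K_1(\Q[H]),t^{\ell}\}$. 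I expect the delicate points to be precisely (i) identifying the longitude correction factor in Turaev's duality and showing it equals $[1-\rho(\mathfrak l)]^{\pm1}$ up to elements of the subgroup, and (ii) in case (b), controlling the homology-correction terms of the gluing so that every extra factor lands in the indeterminacy subgroup; the invocation of duality itself is then formal.
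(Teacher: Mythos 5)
Your proposal is correct and follows essentially the same route as the paper: reduce via Theorem~\ref{thm346} to reciprocity of $\mathcal{T}(E_L^m,\rho\circ p_*)$ (noting $1-t^m$ is self-conjugate up to the indeterminacy), apply Turaev's duality (Corollary~14.2) relating absolute and relative torsion, and dispose of the boundary contribution by showing the torus torsion is trivial when $1-\Upsilon_\rho(\mathfrak{l})$ is invertible in case (a), respectively by passing to the $0$-surgered closed manifold and a Mayer--Vietoris comparison contributing only a factor $(1-t)$ in case (b). The only cosmetic difference is that the paper pins down your ``longitude correction'' exactly, via Lemma~\ref{lem99}, as $\mathcal{T}(\partial E_L^m;\Upsilon_\rho)=1$.
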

\begin{rem} As a result, 
we can observe the reciprocity on the $K_1$-class $\Upsilon_*(\Delta^{K_1}_{\rho}) $ in the $K$-group of a Novikov ring $ \mathcal{Q} _{\Q[H]( \!( t) \!) , \mathrm{id}}$ as well.
Indeed, since $D_i(t)$ and $D_i ( \!( t) \!)$ are division rings, the associated inclusions $ j_{D_i}: D_i(t)\ra D_i( \!( t) \!) $ induce
$\iota: \Q[H](t) \ra \Q[H] ( \!( t) \!)$; thus, the reciprocity inherits on $ \iota_* (\Upsilon_*( A_{F,W}) )$ in $ \mathcal{Q} _{\Q[H]( \!( t) \!) , \mathrm{id}}$.

Furthermore, since the Dieudonn\'{e} determinant over
any division ring $D$ induces an isomorphism $K_1(D) \cong (D^{\times})_{\rm ab}$, the $K_1$-group $ K_1(\Q[H](t))$ is identified with $ (\Q[H](t)^{\times})_{\rm ab}$.
In particular, if $H$ is abelian, we can interpret Theorem \ref{pro366} as the reciprocity of a polynomial.
\end{rem}

\begin{proof}[Proof of Theorem \ref{pro366}]
Let $E_L^m$ be the cyclic covering space as above. For simplicity, let $\Upsilon_{\rho} $ denote the composite $ \Upsilon \circ \rho$.
By virtue of Theorem \ref{thm346}, we may show only the reciprocity in Reidemeister torsions of $E_L^m$.

We first give the proof in the case (a).
Then,
\cite[Corollary 14.2]{Tur} immediately says
\begin{equation}\label{ooo111111}\mathcal{ T} ( E_L^m, \partial E_L^m; \Upsilon_{\rho} ) = \overline{ \mathcal{ T} ( E_L^m ; \Upsilon_{\rho} )} \in K_1(\Q[H](t) )/ \{ K_1(\Q[H]) , \tau^{\ell } \}_{\ell \in \Z}.
\end{equation}
Moreover, thanks to the multiplicity of torsions with respect to short exact sequences (see \cite[Theorem 3.4]{Tur}), we immediately have
\begin{equation}\label{ooo111}\mathcal{ T} ( E_L^m, \partial E_L^m; \Upsilon_{\rho} ) \mathcal{ T} ( \partial E_L^m; \Upsilon_{\rho} ) =
\mathcal{ T} ( E_L^m; \Upsilon_{\rho} ) \in K_1(\Q[H](t) )/ \{ K_1(\Q[H]) , t^{\ell} \}_{\ell \in \Z}.
\end{equation}
By Lemma \ref{lem99} below, the second term $ \mathcal{ T} ( \partial E_L^m; \Upsilon_{\rho} ) $ is equal to $1$. Hence, the equalities \eqref{ooo111111} and \eqref{ooo111} readily mean the required equality.

Next, we discuss the case (b).
Let $M_0$ be the closed 3-manifold obtained by $0$-surgery of $E_L^m $ along $L$.
Notice $\partial M_0 = \emptyset$. Let $p: E_L^m \ra S^3 \setminus L $ be the covering. 
Since $ \rho (\mathfrak{l} )= 1$, the composite $ \rho \circ p_*$ induces
$\Z[\pi_1(M_0 )] \ra \mathcal{A}_{\kappa }( \!( \tau ) \!) . $
Therefore, by \cite[Corollary 14.2]{Tur} again, we readily have the reciprocity
$ \mathcal{ T} ( M_0 ; \Upsilon_{\rho \circ p_*} ) = \overline{ \mathcal{ T} ( M_0 ; \Upsilon_{\rho \circ p_*} )}. $
Hence, it is enough for the proof to show $ (1-t) \mathcal{ T} ( M_0 ; \Upsilon_{\rho \circ p_*} ) = \mathcal{ T} ( E_L^m; \Upsilon_{\rho} ). $

To show this, consider the exact sequence appearing in a Mayer-Vietoris argument
$$ 0 \lra C_*(S^1 \times S^1 ) \stackrel{\iota_1 \oplus \iota_2 }{\lra} C_* (D^2 \times S^1 ) \oplus C_*(E_L^m ) \lra C_*( M_0) \lra 0 \ \ \ \ \ \ (\mathrm{exact}). $$
Since $ 1- \Upsilon_{\rho} ( \mathfrak{l} )=0$, the boundary maps in the second $C_* (D^2 \times S^1 )$ are zero, and $\iota_1$ has a splitting as a chain map. Hence, the sequence is reduced to
$$ 0 \lra C_*(S^1 ) \stackrel{ \iota_2 }{\lra} C_*(E_L^m ) \lra C_*( M_0) \lra 0 \ \ \ \ \ \ (\mathrm{exact}). $$
Since the first and second complexes are acyclic, so is the third.
Notice $\mathcal{T}(C_*(S^1 ) )= (1-t)$ by definition. Hence, by \cite[Theorem 3.4]{Tur} again,
we obtain $(1-t) \mathcal{ T} ( M_0 ; \Upsilon_{\rho \circ p_*} ) = \mathcal{ T} ( E_L^m; \Upsilon_{\rho} ) $ as required.
\end{proof}

\begin{lem}\label{lem99} If $ 1 -\Upsilon_{\rho}(\mathfrak{l}) \in \Q[H]$ is invertible, then the torsion $ \mathcal{ T} ( \partial E_L^m; \Upsilon_{\rho} ) $ is 1.
\end{lem}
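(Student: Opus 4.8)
The plan is to identify $\partial E_L^m$ explicitly as a single torus and to compute its Reidemeister torsion directly from a two-cell CW structure, where the hypothesis forces acyclicity and an immediate cancellation.

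First I would analyze the covering over the boundary. The boundary $\partial(S^3\setminus L)$ is a torus $T$ with $\pi_1(T)=\langle \mathfrak{m},\mathfrak{l}\rangle\cong\Z^2$, and $p$ is classified by the surjection $\pi_1(S^3\setminus L)\to\Z/m$ sending $\mathfrak{m}\mapsto 1$. Restricting this surjection to $\pi_1(T)$ sends $\mathfrak{m}\mapsto 1$ and $\mathfrak{l}\mapsto 0$ (since $\mathfrak{l}$ is null-homologous), so it is already onto $\Z/m$; hence the preimage $\partial E_L^m$ is connected, i.e. a single torus $T'$, with $\pi_1(T')=\langle\overline{\mathfrak{m}},\mathfrak{l}\rangle$ where $p_*(\overline{\mathfrak{m}})=\mathfrak{m}^m$ and $p_*(\mathfrak{l})=\mathfrak{l}$.

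Next I would write the cellular chain complex of $T'$ with coefficients $\Upsilon_{\rho}$ over $R:=\Q[H](t)$ (after the Morita identification), using the standard CW structure with one $0$-cell, two $1$-cells $\overline{\mathfrak{m}},\mathfrak{l}$, and one $2$-cell attached along the commutator $\overline{\mathfrak{m}}\,\mathfrak{l}\,\overline{\mathfrak{m}}^{-1}\mathfrak{l}^{-1}$. Put $\alpha:=\Upsilon_{\rho}(\overline{\mathfrak{m}})$ and $\beta:=\Upsilon_{\rho}(\mathfrak{l})$. The decisive point is that $\alpha$ and $\beta$ commute, being images of commuting elements of the abelian group $\pi_1(T')$; the Fox derivatives of the commutator then reduce to $\partial_2=(1-\beta,\ \alpha-1)^{\top}$ and $\partial_1=(\alpha-1,\ \beta-1)$, and one verifies $\partial_1\partial_2=0$ using $\alpha\beta=\beta\alpha$.

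Finally, the hypothesis that $1-\beta=1-\Upsilon_{\rho}(\mathfrak{l})$ is invertible lets me apply Proposition \ref{kkkk}. Taking the $\mathcal{T}$-chain that selects the $\mathfrak{l}$-column of $A_1$ and deletes the corresponding row of $A_2$, the relevant $(1\times 1)$-minors are $A_1(\xi)=(\beta-1)$ and $A_2(\xi)=(1-\beta)$, both invertible; hence $C_*(T';R)$ is acyclic and $\mathcal{T}(\partial E_L^m;\Upsilon_{\rho})=\varepsilon\,A_1(\xi)^{-1}A_2(\xi)=\varepsilon(\beta-1)^{-1}(1-\beta)=\pm1$, which is trivial in the relevant quotient $K_1$-group. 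The main obstacle I anticipate is not conceptual but organizational: correctly transporting the peripheral coefficient system through $\Upsilon$ and the Morita isomorphism and pinning down the two generators $\alpha,\beta$, together with the verification of the commutativity $\alpha\beta=\beta\alpha$ that produces the cancellation. Once these are in place, the result is the classical statement that an acyclic torus has trivial torsion, which could alternatively be derived from the multiplicativity of torsion in the fibration $T'\to S^1$ using $\chi(S^1)=0$ (cf. \cite{Mil}).
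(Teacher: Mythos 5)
Your proposal is correct and in essence coincides with the paper's proof: both reduce to the standard one-vertex, two-edge, one-face CW structure on the boundary torus (with commuting generators $\overline{\mathfrak{m}},\mathfrak{l}$), write down the resulting three-term chain complex via Fox calculus, and use the invertibility of $1-\Upsilon_{\rho}(\mathfrak{l})$ to get acyclicity and the cancellation down to $\pm 1$, which is trivial in the quotient of $K_1$. The only cosmetic difference is that you package the final computation through Proposition \ref{kkkk}, whereas the paper picks explicit bases $b_i$ and applies the definition of $\mathcal{T}$ directly (and you additionally verify that $\partial E_L^m$ is a single torus, which the paper takes for granted).
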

\begin{proof}Since $\partial E_L^m$ is torus, we can describe the cellular complex as
$$0 \lra \Q[H](t)c_2 \xrightarrow{ (1- \Upsilon_{\rho} (\overline{\mathfrak{m}} ) , 1- \Upsilon_{\rho} (\mathfrak{l}) ) }\Q[H](t)c_1 \oplus \Q[H](t)c_1'
\xrightarrow{\small \begin{pmatrix}
\Upsilon_{\rho} (\mathfrak{l})-1 \\
1- \Upsilon_{\rho} (\overline{\mathfrak{m}} ) \\
\end{pmatrix} }
\Q[H](t) c_0 \lra 0 ,$$
which is acyclic by assumption. If we choose
$$ b_2 = \{ c_2\}, \ \ \ b_1 = \{ c_1\}, \ \ \ b_0 = \{ c_0\},$$
then
$$ [b_2/c_2]=1, \ \ [b_1 \partial_2 b_2 /c_1] = \begin{pmatrix}
1- \Upsilon_{\rho} (\mathfrak{l}) & 1\\
1- \Upsilon_{\rho} (\overline{\mathfrak{m}} ) & 0 \\
\end{pmatrix}, \ \ \ \ \ \ [b_0 \partial_1 b_1 /c_0] = 1- \Upsilon_{\rho} (\overline{\mathfrak{m}} ) .$$
Hence, by the definition of $\mathcal{T}$, we get the conclusion.
\end{proof}

\section{Applications to $m$-fold metabelian Alexander polynomials.}\label{ExaSS77}
The paper \cite{Nos} introduces an ($m$-fold) metabelian Alexander polynomial.
As applications of the previous sections, we will see properties of
the polynomial.

Let us recall the polynomial.
Let $L$ be a knot, 
and $H$ be the torsion subgroup $\mathrm{Tor}(H_1(\widetilde{E}_L^m ; \Z)) $.
Suppose that $ m H =H$, e.g., the case $m$ is a prime power.
Then, the commutator subgroup of $\pi_1(S^3 \setminus L ) $ surjects on $H$ via an abelianization.
Thus, we canonically have a group epimorphism $\rho_{m}^{\rm meta} : \pi_1(S^3 \setminus L ) \ra H \rtimes \Z $ satisfying the assumption ($\dagger$).
Notice $ \mathcal{A}= \Q[H] $ is 
isomorphic to a direct sum of cyclotomic fields over $\Q$; see, e.g., \cite[Corollary 12.10]{Tur}.
Then, {\it the metabelian Alexander polynomial} is defined to be the determinant of the pushforward $\Upsilon_*(A_{F,W})$:
$$ \Delta_{\rho_{m}^{\rm meta} }:= \mathrm{det} (\Upsilon_*(A_{F,W})) \in \Q[H] (t)^{\times }/ (\Q[H]^{\times}). $$
As seen in Appendix \ref{paji}, this polynomial can be also interpreted as a torsion obtained from a $S^1$-valued Morse theory.

As a slight generalization of Lemma 4 of \cite{CG}, we will show the invertibility of $\Upsilon_*(A_{F,W}) $.
\begin{prop}[{cf. \cite{CG}}]\label{iiii}
The matrix $\Upsilon_*(A_{F,W})$ in $ \mathrm{Mat}( 2gm \times 2gm; \Q[H] (t) )$ is invertible.
\end{prop}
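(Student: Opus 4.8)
The goal is to show that the pushforward matrix $\Upsilon_*(A_{F,W})$ is invertible over $\Q[H](t)$. The plan is to reduce invertibility to the non-vanishing of the determinant on each Wedderburn factor $D_i(t)$, and then to invoke the cyclic-covering interpretation from Section \ref{def732} together with a homological computation. Concretely, by the Wedderburn decomposition $\Q[H] \cong \bigoplus_i \mathrm{Mat}(n_i \times n_i ; D_i)$ and the induced splitting $(\oplus \iota_{D_i})_*$, the matrix $\Upsilon_*(A_{F,W})$ is invertible over $\Q[H](t)$ if and only if its image is invertible over each $D_i(t)$. Since each $D_i(t)$ is a (skew) field, invertibility over $D_i(t)$ is equivalent to the non-vanishing of the Dieudonn\'e determinant, equivalently to the acyclicity of the corresponding chain complex.

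First I would translate the statement into the language of Theorem \ref{thm346}. By that theorem, $\mathcal{I} \circ \Upsilon_*(A_{F,W})$, up to the factor $(1-t^m)$, computes the Reidemeister torsion $\mathcal{T}(E_L^m, \rho \circ p_*)$ of the $m$-fold cyclic cover. Thus $\Upsilon_*(A_{F,W})$ is invertible precisely when the chain complex $C_*(E_L^m ; \mathcal{A}_{\mathrm{id}}(\!(t)\!))$ (equivalently its localization over each $D_i(t)$) is acyclic. So the task becomes showing $H_*(E_L^m ; D_i(t)) = 0$ for every Wedderburn factor $D_i$.

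The homological input is where the hypothesis $mH = H$ and the specific structure of $\rho_m^{\mathrm{meta}}$ enter, following Lemma 4 of \cite{CG}. Here I would argue that $E_L^m$ has the homology of a circle, so that $H_*(E_L^m ; \Q)$ is generated by the class dual to $t$; passing to the twisted coefficients $D_i(t)$, the key point is that the representation $\rho_m^{\mathrm{meta}}$ is built so that on the torsion part $H = \mathrm{Tor}(H_1(\widetilde{E}_L^m))$ the induced action has no trivial summand after tensoring with a non-trivial $D_i$, which kills $H_0$ and $H_1$. For the trivial factor $D_i = \Q$ one uses that the variable $t$ acts as multiplication by a genuine indeterminate, so $1-t$ is invertible in $\Q(t)$ and the Wang-type exact sequence forces acyclicity. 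Carrying this out cleanly means identifying the homology of $E_L^m$ with local coefficients in terms of the eigenspace decomposition of $H$ under the deck transformation $\kappa$, and checking that each eigenvalue contribution is annihilated; this eigenspace bookkeeping, together with correctly handling the trivial character separately, is the main obstacle. The rest is formal: acyclicity over every $D_i(t)$ gives invertibility of the Dieudonn\'e determinant on each factor, hence invertibility of $\Upsilon_*(A_{F,W})$ over $\Q[H](t)$ by the Morita isomorphism $K_1(\mathrm{Mat}(n_i m \times n_i m ; D_i(t))) \cong K_1(D_i(t))$.
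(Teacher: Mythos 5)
Your reduction of invertibility to acyclicity is sound and agrees with the paper's: the complex $C_*(E_L^m;\Q[H](t))$ has a splittable surjective $\partial_1$ (since $1-t$ is a unit), so it is acyclic if and only if $\partial_2$, i.e.\ $\Upsilon_*(A_{F,W})$, is invertible, and the Wedderburn/Morita bookkeeping over the factors $D_i(t)$ is harmless. The genuine gap is the step you yourself flag as ``the main obstacle'': proving $H_*(E_L^m;D_i(t))=0$ on the nontrivial factors. The mechanism you offer --- that the $H$-action has no trivial summand after tensoring with a nontrivial $D_i$, ``which kills $H_0$ and $H_1$'' --- is not a proof. Absence of invariants controls twisted $H_0$, but says nothing about twisted $H_1$ of a space (a nontrivial flat line bundle over a surface has plenty of first homology). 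Moreover the premise that $E_L^m$ has the rational homology of a circle is false in general (this is essentially what Casson--Gordon's lemma is needed to establish for prime-power covers), so the Wang-type argument cannot be started from it; it is safe only for the trivial character, where the torsion-module argument over $\Q[t^{\pm1}]$ that you sketch does work.

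The paper closes exactly this hole with a concrete geometric construction rather than an eigenspace heuristic. It first reduces to a direct summand $S\cong\Z/p^n$ of $H$, then forms the finite abelian cover $\widetilde{M}$ of the \emph{infinite cyclic} cover $\widetilde{E}_L$ associated with $\pi_1(\widetilde{E}_L)\to \mathrm{Tor}\,H_1(E_L^m;\Z)\to S$, applies Lemma 4 of \cite{CG} and its corollary to get $H_*(\widetilde{M};\Q)=0$, and uses Shapiro's lemma to identify $C_*(\widetilde{M};\Q)$ with $C_*(E_L^m;\Q[S][t^{\pm1}])$; localizing at $\Q[S](t)$ then gives the acyclicity, and this is also where the standing hypothesis $mH=H$ (e.g.\ $m$ a prime power) actually enters. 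Your outline cites the right reference but substitutes an unproved vanishing claim for its content; as written, the key step would fail.
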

\begin{proof} Let $ S \subset H= \mathrm{Tor} H_1( E_L^m ;\Z)$ be
any direct summand isomorphic to $\Z/p^n$ for some $n \in \mathbb{N}$ and prime $p$.
For the proof, we may replace $H$ by $S$, since 
$GL_n(\Q[H_1 \oplus H_2] )= GL_n( \Q[H_1])\times GL_n (\Q[H_2])$ for any finite abelian groups $H_i.$

Let $\widetilde{E}_L$ be the infinite cyclic covering space of $S^3 \setminus L$. Let $\widetilde{M} \ra \widetilde{E}_L $ be the abelian finite covering associated
with the projection $\pi_1( \widetilde{E}_L) \stackrel{\rm proj.}{\lra} \mathrm{Tor} H_1( E_L^m ;\Z) \stackrel{\rm proj.}{\lra} S$. According to Lemma 4 and its corollary of \cite{CG}, the rational homology $H_*( \widetilde{M}; \Q )$ is shown to be zero.
Moreover, by Shapiro lemma (see, e.g., \cite{KL}), the cellular complex $C_*( \widetilde{M}; \Q )$ is isomorphic to $C_*(E_L^m ; \Q[S][t^{\pm 1}] ) $ of finite dimension. 
Thus, the tensored complex $ C_*(E_L^m ; \Q[S](t) ) = C_*(E_L^m ; \Q[S][t^{\pm 1}] ) \otimes \Q[S](t) $ over $\Q[S](t) $ is acyclic.
In usual, the first boundary $\partial_1$ is a splittable surjection; hence,
the acyclicity of $ C_*(E_L^m ; \Q[S](t) ) $ implies the invertibility of $\partial_2 $. As seen in the proof of Theorem \ref{cor1155}, $\partial_2$ is represented by the matrix $\Upsilon_*(A_{F,W}) $. Hence, we complete the proof.
\end{proof}
As a corollary, we will see the reciprocity of $ \Delta_{\rho_{m}^{\rm meta} }$.
Here, notice that $ \rho(\mathfrak{l})=0_H$ since the preferred longitude $\mathfrak{l}$ is bounded by a Seifert surface in $\widetilde{E}_L$.
Therefore, our situation fulfills the conditions in Theorem \ref{pro366}; hence, we immediately have
\begin{cor} The metabelian Alexander polynomial has reciprocity in the sense of
$$\overline{ \Delta_{\rho_{m}^{\rm meta} }} = \Delta_{\rho_{m}^{\rm meta} } \in \Q[H] (t)^{\times }/\{ \Q[H]^{\times}, \ t^{\pm 1} \}. $$
\end{cor}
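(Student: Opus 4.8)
The plan is to deduce this corollary directly from the $K_1$-level reciprocity already proved in Theorem \ref{pro366}, by applying the determinant exactly as anticipated in the Remark that follows that theorem. Since all of the analytic content (Turaev's duality \cite[Corollary 14.2]{Tur} together with the reduction to the cyclic cover $E_L^m$ via Theorem \ref{thm346}) is packaged inside Theorem \ref{pro366}, what remains is to verify that the present metabelian setting meets its hypotheses and then to transport the resulting $K_1$-identity to a statement about $\det \Upsilon_*(A_{F,W}) = \Delta_{\rho_m^{\rm meta}}$.

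First I would check the hypotheses of Theorem \ref{pro366}. Taking $\mathcal{A} = \Q[H]$ with $H = \mathrm{Tor}(H_1(\widetilde{E}_L^m;\Z))$ finite and $\rho_m^{\rm meta}: \pi_1(S^3 \setminus L) \ra H \rtimes \Z$ places us in situation $(\star)$, and the required invertibility of $\Upsilon_*(A_{F,W})$ over $\Q[H](t)$ is precisely Proposition \ref{iiii}. The remaining point is to confirm condition (b), namely $\rho(\mathfrak{l}) = 1 \in \Q[H]$: because the preferred longitude $\mathfrak{l}$ bounds a Seifert surface lifting into the infinite cyclic cover $\widetilde{E}_L$, its class in $H_1(\widetilde{E}_L^m;\Z)$ vanishes, so its image in the torsion quotient $H$ is the identity. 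With (b) in force, Theorem \ref{pro366} yields
$$\overline{\Upsilon_*(A_{F,W})} = \Upsilon_*(A_{F,W}) \in K_1(\Q[H](t))/\{K_1(\Q[H]), t^{\ell}\}_{\ell \in \Z}.$$
Since $H$ is abelian, $\Q[H]$ is a finite product of cyclotomic fields, so $\Q[H](t)$ is a product of fields and the Dieudonn\'e determinant reduces to the ordinary determinant, giving $K_1(\Q[H](t)) \cong \Q[H](t)^{\times}$. Under this isomorphism $K_1(\Q[H])$ maps onto $\Q[H]^{\times}$, the cyclic subgroup $\{t^{\ell}\}_{\ell}$ onto the subgroup generated by $t^{\pm 1}$, and the bar involution on $K_1$ corresponds to the bar involution on $\Q[H](t)^{\times}$. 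Applying $\det$ to the displayed equality and recalling $\det(\Upsilon_*(A_{F,W})) = \Delta_{\rho_m^{\rm meta}}$ then gives exactly $\overline{\Delta_{\rho_m^{\rm meta}}} = \Delta_{\rho_m^{\rm meta}}$ in $\Q[H](t)^{\times}/\{\Q[H]^{\times}, t^{\pm 1}\}$.

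There is no serious obstacle here, as this is genuinely a corollary. The only steps needing care are the verification that $\rho(\mathfrak{l}) = 1$, which is what selects case (b) of Theorem \ref{pro366} rather than case (a), and the bookkeeping that the determinant isomorphism carries the two quotient subgroups and the involution to their scalar counterparts, both of which are routine given the Remark following Theorem \ref{pro366}.
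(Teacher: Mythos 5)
Your proposal is correct and follows essentially the same route as the paper: the paper likewise notes that $\rho(\mathfrak{l})$ is trivial in $H$ because the preferred longitude bounds a Seifert surface in the infinite cyclic cover, checks that the metabelian setting satisfies the hypotheses of Theorem \ref{pro366} (with invertibility supplied by Proposition \ref{iiii}), and then deduces the corollary by passing to determinants as indicated in the Remark. Your write-up merely makes the determinant bookkeeping more explicit than the paper does.
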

This situation is similar to the Casson-Gordon invariant \cite{CG}, which discusses obstructions of sliceness.
Thus it is reasonable to consider applications for sliceness from our results.
\begin{thm}[{cf. \cite[Theorem 6.2]{KL}}]\label{lll498}Suppose that the knot $L$ is topological slice, and $H_1(E_L^m ;\Z ) $ is isomorphic to $\Z \oplus T $ for some torsion module $T$.

Then, there is a subgroup $ B \subset H =\mathrm{Tor}H_1(E_L^m ;\Z )$ such that $
| B|^2= | H |$ 
and that there exists a polynomial $f(t) \in \Q[H/B](t) $ satisfying
\begin{equation}\label{ooo001}\mathcal{P}_*( \Delta_{\rho_{m}^{\rm meta} }) =a t^n (1-t)f(t)\overline{f(t)} \in \Q[H/B](t),
\end{equation}
for some $ a \in \Q[H/B]^{\times} , n \in \mathbb{Z}$,
where $\mathcal{P}: H \ra H/B $ is the projection.
\end{thm}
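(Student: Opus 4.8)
The goal is a sliceness obstruction, so the plan is to follow the classical Casson--Gordon strategy as adapted by Kirk--Livingston \cite{KL}: a topologically slice knot $L$ bounds a locally flat disk $\Delta$ in $D^4$, and we exploit the resulting $4$-manifold to produce a metabolizer for the relevant linking/torsion pairing. First I would let $V$ be the complement of a tubular neighbourhood of $\Delta$ in $D^4$, so that $\partial V = S^3 \setminus L$, and pass to the $m$-fold cyclic cover $V^m \to V$ branched suitably, with $\partial V^m = E_L^m$. By a standard half-lives-half-dies argument on the torsion pairing on $H = \mathrm{Tor} H_1(E_L^m;\Z)$, the kernel $B \subset H$ of the map $H \to \mathrm{Tor} H_1(V^m;\Z)$ is a metabolizer, which forces $|B|^2 = |H|$; this produces the subgroup $B$ claimed in the statement. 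The quotient $H/B$ then carries the representation we push forward along $\mathcal{P}: H \to H/B$.

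Next I would analyze the torsion $\mathcal{T}(E_L^m; \Upsilon_{\rho})$ pushed to $\Q[H/B](t)$. The key algebraic input is the reciprocity already established: by the Corollary preceding the statement, $\overline{\Delta_{\rho_m^{\rm meta}}} = \Delta_{\rho_m^{\rm meta}}$ in $\Q[H](t)^\times/\{\Q[H]^\times, t^{\pm 1}\}$, and this symmetry is preserved under $\mathcal{P}_*$. The plan is to factor $\mathcal{P}_*(\Delta_{\rho_m^{\rm meta}})$ using the fact that $B$ is a metabolizer: the bounding $4$-manifold $V^m$ gives a half-dimensional subspace on which the intersection/Reidemeister pairing vanishes, so the torsion of $E_L^m$ (computed via $V^m$) splits as a product of a term $f(t)$ coming from that Lagrangian and its conjugate $\overline{f(t)}$. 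Concretely, one computes $\mathcal{T}(V^m, \partial V^m)$ and uses the duality of Theorem \ref{pro366} together with multiplicativity of torsion (\cite[Theorem 3.4]{Tur}) across the pair $(V^m, E_L^m)$ to extract the self-conjugate factorization. The extra factor $(1-t)$ arises exactly as in Theorem \ref{pro366}(b) and Lemma \ref{lem99}, accounting for the $H_1$-summand $\Z$ in the hypothesis $H_1(E_L^m;\Z) \cong \Z \oplus T$, with the unit $a t^n$ absorbing the indeterminacy in $\Q[H/B]^\times$ and the $t^{\pm 1}$ ambiguity.

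The main obstacle I expect is verifying that the torsion of the $4$-manifold $V^m$ actually descends to a single element $f(t) \in \Q[H/B](t)$ and that its product with $\overline{f(t)}$ recovers $\mathcal{P}_*(\Delta_{\rho_m^{\rm meta}})$; this requires that the representation $\rho_m^{\rm meta}$ extend over $\pi_1(V^m)$ after passing to the quotient $H/B$, which is precisely what the metabolizer property guarantees but must be checked at the chain level. In particular one needs that $H_*(V^m; \Q[H/B](t))$ is acyclic (so that $\mathcal{T}(V^m)$ is defined), which should follow from Proposition \ref{iiii} applied on both boundary and total space, combined with the fact that the $\Q$-coefficient homology of the relevant finite cover vanishes by the Casson--Gordon Lemma 4 of \cite{CG}. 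The bookkeeping of the unit $a t^n$ and the sign/indeterminacy modulo $\{\Q[H/B]^\times, t^{\pm 1}\}$ is routine once the central duality-plus-multiplicativity identity is in place.
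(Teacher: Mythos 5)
Your overall strategy matches the paper's: slice disk complement in $D^4$, its $m$-fold cyclic cover, a metabolizer $B$ of the linking form giving $|B|^2=|H|$ and the extension of the representation over the $4$-manifold modulo $B$, the factor $(1-t)$ from comparing $E_L^m$ with its $0$-surgery as in case (b) of Theorem \ref{pro366}, and a duality argument on the bounding $4$-manifold to produce $f\overline{f}$. The paper additionally passes to the closed manifolds $M_0=\partial X$ obtained by $0$-surgery (of $E_L^m$ along $L$ and of the cover $\widetilde{B}$ along $D$) and decomposes $\Q[H/B]$ into cyclotomic fields $\mathbb{F}_i$, producing one $f_i$ per factor and multiplying at the end; these are technical conveniences rather than essential differences.

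There is, however, a genuine gap at the central step. You derive the factorization from ``the duality of Theorem \ref{pro366} together with multiplicativity of torsion across the pair.'' Theorem \ref{pro366} is a reciprocity statement for the $3$-manifold torsion, $\overline{\Delta}=\Delta$; self-conjugacy is strictly weaker than admitting a factorization $a\,t^n f\overline{f}$, so it cannot by itself yield \eqref{ooo001}. What actually produces $f$ in the paper is Theorem \ref{lll354} (i.e.\ \cite[Theorem 5.1]{KL}, a Milnor-type duality for a compact $4$-manifold with boundary): $\mathcal{T}(\partial X,\rho)'=\mathcal{T}(X,\rho,h_*)'\,\overline{\mathcal{T}(X,\rho,h_*)'}^{(-1)^{\dim X}}$, with $f_i=\mathcal{T}(X,\rho_i,h_*)'$. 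Your sketch of ``compute $\mathcal{T}(V^m,\partial V^m)$ and use multiplicativity'' is implicitly reproving this, but you never invoke the Poincar\'e--Lefschetz duality $\mathcal{T}(V^m,\partial V^m)=\overline{\mathcal{T}(V^m)}^{\pm 1}$ that makes it work. Relatedly, your requirement that $H_*(V^m;\Q[H/B](t))$ be acyclic is both unnecessary and unjustified: Proposition \ref{iiii} concerns the $3$-manifold $E_L^m$ and says nothing about the $4$-manifold, and in general the $4$-manifold complex is \emph{not} acyclic. Theorem \ref{lll354} is formulated precisely for the non-acyclic case, using a choice of homology bases $h_*$; without it your $f(t)$ is not defined. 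Once you replace your duality step by an appeal to Theorem \ref{lll354} applied to $(X,M_0)$, the argument closes up as in the paper.
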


\begin{rem}\label{l98}
We give a comparison with the works \cite{KL,HKL}.
The papers suppose $m$ to be a prime power, and consider a homomorphism $\chi: H_1(B_L^m;\Z) \ra \Z/p^d \Z$ for some prime $p$. Choose a homomorphism $\lambda: \Z/p^d \Z \ra GL_1(\mathbb{C})$.
Then, \cite[Theorem 6.2]{KL} claims that
the further pushforward $ (\lambda \circ \chi)_*\circ \mathcal{P}_*( \Delta_{\rho_{m}^{\rm meta} }) $ is decomposed as \eqref{ooo001}. Thus, Theorem \ref{lll498} is a slight generalization of \cite[Theorem 6.2]{KL}, although the proof below is outlined on discussions in \cite{CG,HKL,KL}.
\end{rem}

\subsection{Proof of Theorem \ref{lll498}}
For the proof, we review Reidemeister torsions suitable to non-acyclic cases, in terms of determinants.
Let $\mathbb{F}$ be a commutative field of characteristic zero.
Let $X$ be a finite connected CW-complex $X$. 
Choose homomorphisms $ \rho: \pi_1(X) \ra \mathbb{F}^{\times}$ and
$\alpha: \pi_1(X) \ra \mathbb{Z}= \{ t^n \}_{n \in \Z}$.
We have the tensor representation $ \rho\otimes \alpha : \pi_1(X) \ra \mathbb{F}[t^{\pm 1}]^{\times}$.
Then, the cellular complex with local coefficients is defined to be
$$C_*(X; \mathbb{F}(t)) = \mathbb{F}(t) \otimes_{ \mathbb{F}[\Z]} C_*(X; \mathbb{F}[\Z] ) =\mathbb{F}(t) \otimes_{ \rho \otimes \alpha} C_*(\widetilde{X}).$$
If $C_*$ is a based chain complex, $c_i$ is a basis for $C_i$, $b_i$ a basis for the boundaries $B_i$, $h_i$ a basis for the homology $H_i$,
then {\it the Reidemeister torsion $\mathcal{T}'$ of the based chain complex} is defined by
$$ \mathcal{T}(X,\rho, h_*)'= \frac{\prod_i \mathrm{det}[ b_{2i} \widetilde{h}_{2i}\widetilde{b}_{2i} / c_{2i}]}{ \prod_i \mathrm{det}[
b_{2i-1} \widetilde{h}_{2i-1}\widetilde{b}_{2i-1} / c_{2i-1}] } \in \mathbb{F}(t)^{\times} /\{ t^{n}, a \in \mathbb{F}^{\times} \}. $$
In this expression, $ \widetilde{h}_{i}$ is a choice of lift of $h_i$ to $C_i$, and $\widetilde{b}_{i} $ is a choice of $b_i$ to $C_{i+1}$ using the differential $\partial_{i+1} : C_{i+1} \ra C_i. $
It is known this $\mathcal{T}(X,\rho,h_*)'$ is independent of the choices of $b_i$ and of the lifts.
Notice that, if $C_*$ is acyclic and $\mathrm{dim}X \leq 3$, then
this $ \mathcal{T}(X,\rho, \emptyset )'$ is recovered by
the torsion in \S \ref{invrelS3}: precisely,
$\mathrm{det} \mathcal{T}(X,\rho \otimes \alpha)= \mathcal{T}(X,\rho, \emptyset )'$ by definitions. 

In order to start the proof of Theorem \ref{lll498},
following \cite{Mil,Mil2}, let us notice the following theorem (which is also stated in \cite[Theorem 5.1]{KL}).
\begin{thm}[{see \cite[Theorem 5.1]{KL}}]\label{lll354}
Let $X$ be an oriented connected compact $C^{\infty}$-manifold with boundary $\partial(X)$. Choose a triangulation of $X$ as a CW-complex.
Suppose that $C_*(X;\F (t))$ is not acyclic, but $C_*(\partial X; \F (t))$ is acyclic, and choose an appropriate basis $h_q$ for $H_q(X;\F (t))$ for each $q$. 
Then, with respect to these bases,
$$ \mathcal{T}(\partial X, \rho)'=\mathcal{T}( X, \rho, h_*)' \overline{\mathcal{T}( X, \rho,h_*)' }^{ (-1)^{\mathrm{dim} (X)}}. $$
\end{thm}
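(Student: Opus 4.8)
The plan is to reconstruct Milnor's duality argument for Reidemeister torsion, feeding Poincar\'e--Lefschetz duality into the multiplicativity of torsion over the short exact sequence of the pair $(X,\partial X)$. Throughout I set $n=\dim X$ and let the overline denote the involution on $\F(t)$ induced by $\bar t = t^{-1}$, which descends to the value group $\F(t)^{\times}/\{t^n, a\in \F^{\times}\}$.

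First I would assemble two inputs. On the geometric side, since $X$ is oriented and compact, cap product with the relative fundamental class $[X,\partial X]$ furnishes a chain homotopy equivalence
$$ C_*(X;\F(t)) \ \simeq \ \overline{\Hom_{\F(t)}\bigl( C_{n-*}(X,\partial X;\F(t)),\ \F(t) \bigr)}, $$
where the overline records the conjugate $\F(t)$-module structure so that the right-hand side is again a complex of $\F(t)$-modules. On the algebraic side, Milnor's computation of the torsion of a dual based complex (see \cite[\S 10]{Mil}, \cite{Mil2}) shows that, taking the dual bases on cells and on homology, the torsion of the right-hand complex is $\overline{\mathcal{T}(X,\partial X,h^*)'}^{\,(-1)^{n-1}}$ modulo the standing indeterminacy.

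Next I would use the hypothesis that $C_*(\partial X;\F(t))$ is acyclic. The homology long exact sequence of the pair then collapses to isomorphisms $H_q(X;\F(t))\cong H_q(X,\partial X;\F(t))$ for every $q$, and I would fix the bases $h_*$ on $X$ and the dual bases $h^*$ on $(X,\partial X)$ to be compatible both under these connecting isomorphisms and under the duality equivalence above. With these matched choices the duality equivalence yields
$$ \mathcal{T}(X,\rho,h_*)' \ = \ \overline{\mathcal{T}(X,\partial X,h^*)'}^{\,(-1)^{n-1}} $$
in $\F(t)^{\times}/\{t^n, a\in\F^{\times}\}$; equivalently $\mathcal{T}(X,\partial X,h^*)' = \overline{\mathcal{T}(X,\rho,h_*)'}^{\,(-1)^{n-1}}$, since the overline is an involution. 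Separately I would apply the multiplicativity of torsion \cite[Theorem 3.4]{Tur} to the short exact sequence of based complexes
$$ 0 \lra C_*(\partial X;\F(t)) \lra C_*(X;\F(t)) \lra C_*(X,\partial X;\F(t)) \lra 0. $$
Because $\partial X$ is acyclic and the homology bases were chosen to agree under the connecting isomorphisms, the torsion of the associated homology sequence is trivial, and multiplicativity reduces to $\mathcal{T}(\partial X,\rho)' = \mathcal{T}(X,\rho,h_*)' / \mathcal{T}(X,\partial X,h^*)'$; here $\mathcal{T}(\partial X,\rho)'$ is the ordinary acyclic torsion with empty homology basis.

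Finally I would substitute the duality relation into this product. Replacing $\mathcal{T}(X,\partial X,h^*)'$ by $\overline{\mathcal{T}(X,\rho,h_*)'}^{\,(-1)^{n-1}}$ and collecting the sign gives
$$ \mathcal{T}(\partial X,\rho)' = \mathcal{T}(X,\rho,h_*)'\,\overline{\mathcal{T}(X,\rho,h_*)'}^{\,(-1)^{n}}, $$
which is the asserted formula. The step I expect to be the main obstacle is the bookkeeping of bases and signs: one must check that the homology bases $h_*$ and $h^*$ can really be matched simultaneously under Poincar\'e--Lefschetz duality and under the connecting isomorphisms so that no stray determinant survives modulo $\{t^n, a\in\F^{\times}\}$, and one must pin down the exponent $(-1)^{n-1}$ in the duality relation, whose shift by one against the multiplicativity relation is exactly what produces the final exponent $(-1)^{\dim X}$.
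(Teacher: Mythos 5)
Your argument is correct and is essentially the same as the paper's: the paper gives no proof of Theorem \ref{lll354} at all, quoting it from \cite[Theorem 5.1]{KL} following \cite{Mil,Mil2}, and those sources establish it exactly as you do, by combining Poincar\'e--Lefschetz duality (realized as a simple chain equivalence between $C_*(X)$ and the dual of $C_{n-*}(X,\partial X)$), Milnor's formula for the torsion of a dual based complex, and multiplicativity of torsion over the pair $(X,\partial X)$ with the boundary acyclic. The basis-matching issue you flag at the end is not an obstacle to be overcome but is precisely the content of the ``appropriate basis'' hypothesis in the statement, so your outline accounts for everything the cited proof uses.
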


\begin{proof}[Proof of Theorem \ref{lll498}]
We give some preparations.
Let $D \subset B^4$ be the slice disk of $L$ in the 4-ball,
and $\widetilde{B} \ra B^4 \setminus D$ be the $ m$-fold covering.
Let $M_0$ (resp. $X$) be the oriented closed manifold obtained by $0$-surgery of $E_L^m $ along $L$ (resp. of $\widetilde{B} $ along $D$). Then, we notice $ \partial X =M_0$, and
$\mathrm{Tor}H_1(M_0;\Z) \cong H= \mathrm{Tor} H_1( \widetilde{E}_L^m ;\Z) $.
Let $B \subset H$ be the subgroup consisting of metabolizers of the linking form on $M_0$.
Then, it is known (see \cite{CG} or \cite[\S 6]{KL}) that $|B|^2=|H|$ and the projection $\rho_{m}^{\rm meta} : \pi_1(M_0) \ra H$
extends to $ \pi_1(X) \ra H/B$.
Recall that the group ring $\Q[H/B]$ is ring isomorphic to $\oplus_i \mathbb{F}_i $, where $\mathbb{F}_i $ is a cyclotomic field over $\Q$.
Let $q_i : \Q[H/B] \ra \mathbb{F}_i$ be the projection.

Notice that we can find $f_i \in \mathbb{F}_i(t) $
satisfying 
\begin{equation}\label{ooo236}q_i \circ \mathcal{T}(M_0, \rho)' =a_i t^{n_i} f_i(t)\overline{f_i(t)} \in \mathbb{F}_i(t),
\end{equation}
for some $ a_i \in \mathbb{F}_i^{\times} , n_i \in \mathbb{Z}$.
Indeed, we may only let $ \rho_i$ be $q_i \circ \rho_{m}^{\rm meta} $, and
$ f_i (t)$ be $ \mathcal{T}( X, \rho_i, h_*) $ in Theorem \ref{lll354}. Moreover, as in Theorem \ref{pro366}, each $ \mathcal{T}(M_0, \rho_i)' $ has reciprocity, we may suppose $n_1=n_2= \cdots $.
Furthermore, by the end of the proof of Theorem \ref{pro366}, we
notice $ q_i \circ \mathcal{P}_* (\Delta_{\rho_{m}^{\rm meta} })=\mathcal{T}(E_L^m, \rho_i)' =(1-t) \mathcal{T}(M_0, \rho_i)' $.
To conclude, since $\Q[H/B] (t ) \cong \oplus_i \mathbb{F}_i (t) $,
the multiplications of \eqref{ooo236} running over $i $ implies the required equality \eqref{ooo001}.
\end{proof}


\appendix
\section{Appendix; relation to $S^1$-valued Morse theoretic torsions}\label{paji}

Fix a preferred longitude $\mathfrak{l} \in \pi_1(S^3 \setminus L)$.
Under an assumption $\rho (\mathfrak{l} )=1$,
the $K_1$-class $\Phi_{\rho, k}$ can be described from
$S^1$-valued Morse theory; however,
this section essentially contains nothing new. In fact, this sections are analogous to \cite{Paji,Kit2}.
Here, we suppose notation in \S\S \ref{defS2} and \ref{invrelS3}.

Let us roughly review the (twisted) $S^1$-valued Morse theory, and the main result of \cite{Paji}; see \cite{Paji} for the details.
Let $M$ be a connected closed $C^{\infty}$-manifold, and
$f : M \ra S^1 $ be a Morse map. Suppose that $f_* : H_1(M) \ra H_1(S^1)= \Z$ is surjective.
As in Example \ref{exppp0}, let $G$ be a semidirect product $ H \rtimes \Z$ with projection $\chi : G \ra \Z$, and
$ \mathcal{A}$ be $ \Q[H]$. Then, we have the Novikov ring $\mathcal{A}_{\kappa}( \!( \tau) \!) $.

We study a logarithm from $K_1( \mathcal{A}_{\kappa}( \!( \tau) \!) )$.
For $n \in \mathbb{Z}$, let $\Gamma_n$ be the set of conjugacy classes contained in $ \chi^{-1}(n)$.
Take the $\Q$-vectors space $\Q \Gamma_n$ spanned by $\Gamma_n$, and
define $ \mathfrak{G}$ by $\prod_{n \geq 0} \Q \Gamma_n$.
Furthermore, consider the multiplicative subgroup, $W$, of $ \mathcal{A}_{\kappa}( \!( \tau) \!)^{\times } $ consisting of elements of
the form $1+ a_1 \tau + a_2 \tau ^2 + \cdots$. The image of $W$ in $K_1( \mathcal{A}_{\kappa}( \!( \tau) \!) )$ will be denoted by $\widehat{W}$.
The logarithm $\mathrm{log}: W \ra \mathfrak{G} $ is defined by
$$ \mathrm{log}(1 + \mu \tau ) = \mu \tau - \frac{(\mu \tau)^2}{2} + \cdots + (-1)^{n-1} \frac{(\mu \tau)^n}{n} + \cdots \ \ \ \ \mathrm{with} \ \ \mu \in \mathcal{A}_{\kappa}[ \! [ \tau] \!] .$$
Then, it is shown \cite[Lemma 1.1]{Paji} that this map induces a homomorphism $ \mathcal{L}: \widehat{W} \ra \mathfrak{G}$.
The subgroup $\widehat{W} $ is known to be a direct summand of $K_1( \mathcal{A}_{\kappa}( \!( \tau) \!) )$ \cite{PR}; hence, $ \mathcal{L}$ is regarded as a homomorphism from $ K_1( \mathcal{A}_{\kappa}( \!( \tau) \!) )$.

Next, we will review non-ablelian eta functions.
For a vector field $v$, which satisfies a ``Kupka-Smale condition",
let $\mathrm{Cl}(v)$ be the closed orbits of $v$.
For a closed orbit $\gamma \in \mathrm{Cl}(v)$, let $\epsilon(\gamma) \in \{\pm 1 \}$ denote the
index of the corresponding Poincar\'{e} map; let $m(\gamma)$ denote the multiplicity of $\gamma$.
Here is the definition of {\it the non-ablelian eta function} of $-v$:
$$ \eta_L(-v) = \sum_{\gamma \in \mathrm{Cl}(-v) } \frac{\epsilon (\gamma)}{ m (\gamma)} \{ \gamma\} \in \mathfrak{G} .$$
Let denote $ \mathcal{G}(f)$ be the set of vector fields satisfying the Kupka-Smale condition.

The main theorem of \cite{Paji} clarified the difference between torsions from the usual complex and from the Novikov complex.
To be precise, 
there is an open dense subset $ \mathcal{G}_0(f) \subset \mathcal{G}(f)$ with respect to a certain $C^0$-topology such that, for every $v \in \mathcal{G}_0(f)$, there is a chain homotopy equivalence
$$ \phi: C_*(v) \lra C_*( \widetilde{M};\Z)\otimes_{\Z[H] } \mathcal{A}_{\kappa}( \!( \tau) \!) $$
such that 
$$ \mathcal{L} (\tau (\phi)) = \eta_L( -v). $$
Here $\tau(\phi)$ is the torsion of the acyclic complex of the cokernel $\mathrm{Coker}(\phi)$.

We will give a conclusion from the assumption ($*$) and
suppose $\rho (\mathfrak{l})=1$ as above (For instance, the setting in \S \ref{ExaSS77} is satisfied).
Let $M_{L,0}$ be the closed 3-manifold obtained by $0$-surgery of $S^3 \setminus L $ along $L$.
Then, $\rho$ induces a homomorphism $ \pi_1( M_{L,0}) \ra \mathcal{A}_{\kappa}( \!( \tau) \!) $.
As in the proof of Theorem \ref{pro366} implies the torsion $\mathcal{T} (M_{L,0}, \rho ) $ is equal to $(1-t)^{-1} \mathcal{T} (S^3 \setminus L, \rho )$. Then, by the proof Theorem \ref{cor1155}, we have
$ [\Phi_{\rho,k}]= (1-t)\mathcal{T} (M_{L,0}, \rho ) . $
To conclude, the torsions $\mathcal{L}([\Phi_{\rho,k}] ) $} can be described by the torsion defined from $S^1$-valued Morse complex and the eta function.

\section{Relation to the higher order Alexander polynomial}\label{invrelS4}

The previous papers \cite{C,Har} suggested a generalization of the classical Alexander polynomial; see also \cite{GS,Fri2} for the studies.
The point is that the generalized polynomial is defined from
a group $G$, which is locally indicable and amenable, not any group. 
The purpose of this section is to show Proposition \ref{prop188} below, which suggests a relation between the generalized polynomial and the $K_1$-class $\Delta_{\rho}^{K_1}$. The facts and explanations in this section are essentially based on \cite{Fri2}; there is almost nothing new in this appendix. This appendix supposes terminology in \S\S \ref{defS2}-- \ref{invrelS3}.

For this, we start by reviewing the situation in \cite{C,Har}.
Let $\mathrm{Ab}: \pi_1(S^3 \setminus L) \ra \Z $ be an abelianization.
Let $G$ be a locally indicable and amenable group. It is known that the group ring $\Z[G]$ embeds in a (skew) fractional field. 
Fix an epimorphism $\phi: \pi_1(S^3 \setminus L) \ra G$ 
such that there exists a group homomorphism $\varphi_G: G \ra \Z$ satisfying $\mathrm{Ab}= \varphi_G \circ \phi $.
Such a pair $(\varphi_G, \phi)$ is called {\it an admissible pair} for $\pi_1(S^3 \setminus L) $, following \cite[Definition 1.4]{Har}.

Let $G'$ be the kernel $\Ker (\varphi_G : G \ra \Z)$. Since $G' $ is also locally indicable and amenable, $\Z[G']$ embeds in a fractional field $ \mathbb{K}^{G'}$.
Let $\mu \in G$ be $ \phi (\mathfrak{m})$, and define $\gamma : \mathbb{K}^{G'} \ra \mathbb{K}^{G'}$ to be the homomorphism induced by $\gamma (g)=\mu g\mu^{-1}$.
Then, we obtain the skew polynomial ring $\mathbb{K}^{G'}_{\gamma }[\tau^{\pm 1}] $, and a ring homomorphism
$$\nu: \Z[G] \lra \mathbb{K}_{\gamma}^{G'}[\tau^{\pm 1}] ; \ \ \ \ \ \sum_g n_g g \longmapsto \sum_g n_g g \mu^{-\phi (g)} \tau^{\phi (g)} . $$
Let $ \mathcal{A}$ be the division ring $\mathbb{K}^{G'} $, and $\kappa$ be $\gamma$.
Then, the following composite satisfies Assumption $(\dagger)$:
\begin{equation}\label{ooo222}\rho_G : \Z [ \pi_1(S^3 \setminus L)]\stackrel{\phi}{\lra} \Z[G] \stackrel{\nu}{\lra} \mathbb{K}^{G'}_{\gamma} [\tau^{\pm 1}] \hookrightarrow \mathbb{K}^{G'}_{\gamma}( \!( \tau) \!)
\end{equation}

Next, we review the higher order Alexander polynomial.
The ring $\mathbb{K}^{G'}_{\gamma} [\tau^{\pm 1}]$ is known to be a principal ideal domain since $\mathbb{K}^{G'}_{\gamma}$ is a skew field.
For a finitely generated right $\mathbb{K}^{G'}_{\gamma} [\tau^{\pm 1}]$-
module $H$, the elementary divisor theorem claims an isomorphism
$$ H \cong \bigoplus_{i : \ 1 \leq i \leq \ell} \mathbb{K}^{G'}_{\gamma} [\tau^{\pm 1}]/
p_i(t) \mathbb{K}^{G'}_{\gamma} [\tau^{\pm 1}] $$
for some $\ell \in \N$ and $p_i(t) \in \mathbb{K}^{G'}_{\gamma} [\tau^{\pm 1}]$ for $i \in \{ 1,\dots, \ell\}$.
Following \cite{C,Fri2}, we define ord$(H)$ by the product $p_1(t) \cdots p_{\ell}(t)$.
Although it is a subject to discuss 
which set this ord$(H)$ should be contained in,
according to \cite{Fri2}, we regard ord$(H)$ as an element in $ \mathbb{K}^{G'}_{\gamma} (\tau)^{\times}_{\rm ab} \cup \{0\} $ up to multiplication by an element of the form $k \tau ^e$ with $ k \in \mathbb{K}^{G'}_{\gamma}$ and $e \in \Z$.
As is known, ord$(H)$ depends on only $H$; see \cite[Theorem 3.1]{Fri2}.
Then, the order of the $i$-th homology
$$ \Delta_{i}^{\psi} := \mathrm{ord}H_i( S^3 \setminus L ; \mathbb{K}^{G'}_{\gamma}[\tau^{\pm 1}] ) \in \mathbb{K}^{G'}_{\gamma} (\tau)^{\times}_{\rm ab}/\{k \tau ^e \}_{k \in \mathbb{K}^{G'}_{\gamma}, e \in \Z} $$
is called {\it the higher order Alexander polynomial}.

To state Proposition \ref{prop188},
let us consider the natural inclusion $\lambda $ from the fractional field $ \mathbb{K}^{G'}_{\gamma} (\tau) $ into $ \mathbb{K}^{G'}_{\gamma}( \!( \tau) \!) $, since $\mathbb{K}^{G'}_{\gamma}( \!( \tau) \!) $ is also a field.
Then, by the isomorphism $ K_1(\mathbb{K}^{G'}_{\gamma}( \!( \tau) \!)^{\times }) \cong (\mathbb{K}^{G'}_{\gamma}( \!( \tau) \!)^{\times })_{\rm ab}$, the inclusion
yields the homomorphism
$$ \lambda_* : \mathbb{K}^{G'}_{\gamma} (\tau)^{\times}_{\rm ab}/\{k \tau ^e \}_{k \in \mathbb{K}^{G'}_{\gamma}, e \in \Z} \lra \mathcal{Q}_{\mathcal{A}, \gamma} / \{ \tau^e \}_{e\in \Z} .$$
Then, the pushforward of $\Delta_{1}^{\psi}$ turns out to be almost
the $K_1$-class $ \Delta_{\rho_G}^{K_1}$; More precisely,
\begin{prop}\label{prop188}
Suppose that the matrix $A_{F,W}$ is invertible. Then, $ \Delta_{i}^{\psi}$ is zero if $i > 1$, and
$ \Delta_{0}^{\psi} =1- \tau$.
Furthermore, the following equality holds: 
$$\lambda_* (\Delta_{1}^{\psi}) = \Delta_{\rho_G}^{K_1} \in \mathcal{Q}_{\mathcal{A}, \gamma} / \{ \tau^e \}_{e\in \Z} .$$
\end{prop}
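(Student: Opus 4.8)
The plan is to read off both invariants from the single chain complex attached to the Seifert-surface presentation \eqref{oo456}, now taken with coefficients in the (skew) principal ideal domain $R := \mathbb{K}^{G'}_{\gamma}[\tau^{\pm 1}]$, and then to pass to its skew field of fractions $\mathbb{K}^{G'}_{\gamma}(\tau)$. Concretely, the higher-order polynomials $\Delta_i^{\psi}$ are the orders of $H_i(S^3\setminus L;R)$, whereas $\Delta_{\rho_G}^{K_1}=[\tau^{-g}A_{F,W}]$ is essentially $[A_{F,W}]$; I will bridge the two by identifying the Reidemeister torsion of $S^3\setminus L$ over $\mathbb{K}^{G'}_{\gamma}(\tau)$ with \emph{both} expressions. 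First I would record that $A_{F,W}$ of \eqref{kkkkk} has entries in $R$ and that its invertibility over the Novikov ring $\mathbb{K}^{G'}_{\gamma}(\!(\tau)\!)$ is equivalent to invertibility over $\mathbb{K}^{G'}_{\gamma}(\tau)$, since both are skew fields containing $R$ faithfully and the latter is the Ore localization of $R$.

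Next I would carry out the homology bookkeeping over $R$. Using the complex from the proof of Theorem \ref{cor1155},
$$ 0 \to R^{2g} \xrightarrow{\partial_2} R^{2g}\oplus R\mathfrak{m} \xrightarrow{\partial_1} R \to 0, $$
the entry $\partial_1(\mathfrak{m})=1-\tau$ is a non-zero-divisor, so $\partial_1$ is onto and the complex tensored with $\mathbb{K}^{G'}_{\gamma}(\tau)$ is acyclic exactly when the top $2g\times 2g$ block $A_{F,W}$ of $\partial_2$ is invertible. Flatness of the Ore localization then forces every $H_i(S^3\setminus L;R)$ to be an $R$-torsion module. As $\Ker\partial_2\subseteq R^{2g}$ is both torsion and torsion-free, $H_i=0$ for $i\ge 2$, giving the stated (trivial) values of $\Delta_i^{\psi}$; and $H_0=\Coker\partial_1\cong R/(1-\tau)R$, the standard computation for a knot exterior, so $\Delta_0^{\psi}=1-\tau$. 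In particular $H_1$ is $R$-torsion and $\Delta_1^{\psi}=\mathrm{ord}\,H_1$ is defined and nonzero.

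Finally I would invoke the order--torsion dictionary. Friedl's generalization of the Milnor--Turaev theorem to skew Ore domains (\cite{Fri2}) states that for such a $\mathbb{K}^{G'}_{\gamma}(\tau)$-acyclic complex of finitely generated free $R$-modules the Reidemeister torsion equals $\prod_i(\mathrm{ord}\,H_i)^{(-1)^{i+1}}$ in $(\mathbb{K}^{G'}_{\gamma}(\tau)^{\times})_{\rm ab}$, up to units of $R$; with the computations above this is $(1-\tau)^{-1}\Delta_1^{\psi}$. On the other hand, the same $\xi$-chain computation as in the proof of Theorem \ref{cor1155}, now over $\mathbb{K}^{G'}_{\gamma}(\tau)$, evaluates the very same torsion as $(1-\tau)^{-1}A_{F,W}$. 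Comparing the two expressions and cancelling the common factor $(1-\tau)^{-1}$ in the abelianized group yields $\Delta_1^{\psi}=A_{F,W}$ up to multiplication by units of $\mathcal{A}=\mathbb{K}^{G'}$ and by powers $\tau^{e}$. Pushing forward along the inclusion $\lambda$ and recalling $\Delta_{\rho_G}^{K_1}=[\tau^{-g}A_{F,W}]$, the factor $\tau^{-g}$ is absorbed modulo $\{\tau^{e}\}$, and the desired equality $\lambda_*(\Delta_1^{\psi})=\Delta_{\rho_G}^{K_1}$ in $\mathcal{Q}_{\mathcal{A},\gamma}/\{\tau^e\}_{e\in\Z}$ follows.

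The main obstacle I anticipate is not any single step but the careful tracking of indeterminacies across the two determinant formalisms: the Reidemeister torsion lives in a quotient of $K_1$ by $\pm\rho(\pi_1)$, the order lives in $(\mathbb{K}^{G'}_{\gamma}(\tau)^{\times})_{\rm ab}$ modulo $\{k\tau^{e}\}$, and $\Delta_{\rho_G}^{K_1}$ lives in $\mathcal{Q}_{\mathcal{A},\gamma}$; one must check that the sign, the $R$-unit ambiguity, and the $\tau$-power ambiguity in each are compatible so that the final equality is meaningful in $\mathcal{Q}_{\mathcal{A},\gamma}/\{\tau^{e}\}$. A secondary point requiring care is the identification $H_0\cong R/(1-\tau)R$ and the validity of the order--torsion relation in the genuinely non-commutative PID $R$, for which I would lean on the framework of \cite{Fri2}.
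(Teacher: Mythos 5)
Your proposal follows essentially the same route as the paper: compute the homology of the presentation chain complex over $\mathbb{K}^{G'}_{\gamma}[\tau^{\pm 1}]$ to get $\Delta_0^{\psi}=1-\tau$ and the vanishing for $i>1$, invoke Friedl's order--torsion theorem from \cite{Fri2} to write the Reidemeister torsion as $\Delta_1^{\psi}(1-\tau)^{-1}$, and compare with the identification $\mathcal{T}(S^3\setminus L,\rho_G)=(1-\tau)^{-1}[A_{F,W}]$ established in the proof of Theorem \ref{cor1155}. The only (immaterial) divergence is that you run the homology bookkeeping on the Seifert-surface complex while the paper uses the Wirtinger complex \eqref{ooo24}, and you spell out the torsion-module and indeterminacy checks that the paper leaves implicit.
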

\begin{proof}
From the cellular complex \eqref{ooo24}, we see that
the $i$-th homology is vanishes for $i>1$, i.e., $ \Delta_{i}^{\psi}=0$,
amd if $i=0$, we find
$$H_0( S^3 \setminus L ; \mathbb{K}^{G'}_{\gamma}[\tau^{\pm 1}] )\cong \mathbb{K}^{G'}_{\gamma}[\tau^{\pm 1}]/ (1- \tau) \mathbb{K}^{G'}_{\gamma}[\tau^{\pm 1}]. $$
Thus, $ \Delta_{0}^{\psi} =1- \tau$.

Let us consider the case $i=1$. Then, the main theorem of \cite{Fri2},
$$ \mathcal{T}( S^3 \setminus L,\rho_G)= \Delta_{1}^{\psi}/ \Delta_{0}^{\psi} \in \mathbb{K}^{G'}_{\gamma} (\tau)^{\times}_{\rm ab}/\{k \tau ^e \}_{k \in \mathbb{K}^{G'}_{\gamma}, e \in \Z}, $$
where $\rho_G$ is the composite in \eqref{ooo222}.
In the proof of Theorem \ref{cor11}, $\lambda_*(\mathcal{T}( S^3 \setminus L,\rho_G)) $ was shown to be $[A_{F,W}] \cdot (1 -\tau)^{-1}$. Hence, we have the required equality.
\end{proof}

However, the point is that, in general, the fractional field $\mathbb{K}^{G'}$ is quite incomprehensible from quantitative viewpoint; in particular,
it is difficult to compute the abelianization $\mathbb{K}^{G'}_{\gamma} (\tau)^{\times}_{\rm ab}$
and to distinguish whether two elements in $\mathbb{K}^{G'}_{\gamma} (\tau)^{\times}_{\rm ab}$ are equal or not.
By the reasons, it is believed that there is no example of computing their Alexander polynomial with non-triviality even if $L$ is an easy knot.

In contrast, in the definition of the $K_1$-class $ \Delta_{\rho}^{K_1} $,
we considered no fraction.
In \cite[\S\S 4--6]{Nos}, the author showed some non-trivial examples of $ \Delta_{\rho}^{K_1} $
without using fraction, but using a logarithm and the homomorphism $\Upsilon_*$.
This is a reason why we employ Novikov rings instead of fractional fields and polynomial ring.


\normalsize
DEPARTMENT OF
MATHEMATICS
TOKYO
INSTITUTE OF
TECHNOLOGY
2-12-1
OOKAYAMA
, MEGURO-KU TOKYO
152-8551 JAPAN

\end{document}